\newcommand{\delete}[1]{ }
\newcommand{\ftransf}{$f$-transformation}
\newcommand{\toP}{\xrightarrow{P}}
\newcommand{\la}{\lambda}
\newcommand{\eps}{\varepsilon}
\newcommand{\RR}{\mathds{R}}
\newcommand{\NN}{\mathds{N}}
\newcommand{\comment}[1]{ }
\newcommand{\longcomment}[1]{ }
\newcommand{\be}{\begin{equation}}
\newcommand{\ee}{\end{equation}}
\def\E{{\mathbb E}}
\def\V{{\mathbb V}}
\def\F{{\mathcal F}}
\def\bV{{\mathbf V}}
      \newtheorem{theorem}{Theorem}[section]
       \newtheorem{proposition}[theorem]{Proposition}
       \newtheorem{lemma}[theorem]{Lemma}
\theoremstyle{remark}
       \newtheorem{remark}[theorem]{Remark}
\theoremstyle{definition}
\newtheorem{definition}[theorem]{Definition}
\newtheorem{conjecture}[theorem]{Conjecture}
\newtheorem{example}{Example}[section]
\def\E{{\mathbb E}}
\def\V{{\rm Var}}
\newcommand{\Var}{\V}
\newcommand{\Cov}{{\rm Cov}}
\def\tr{{\rm tr}}
\newcommand{\calF}{\mathcal{F}}
\numberwithin{equation}{section}
\newcommand{\X}{\mathbf{X}}
\newcommand{\Y}{\mathbf{Y}}
\newcommand{\Z}{\mathbf{Z}}
\newcommand{\un}[1]{\underline{\mathbf{#1}}}
\newcommand{\wekt}[2]{\left[\begin{matrix}
#1 \\
#2
\end{matrix}\right]}
\newcommand{\wektT}[2]{\left[\begin{matrix}
#1 , #2
\end{matrix}\right]^T}
\newcommand{\macierz}[4]{\left[\begin{matrix}
  #1 & #2 \\ #3 & #4
\end{matrix}\right]}
\newcommand{\1}{\mathbf{1}}
\newcommand{\TT}{\mathcal{T}}
\newcommand{\ST}{\mathcal{S}}
\def\t{\un{t}}
\def\s{\un{s}}
\def\u{\un{u}}
\def\var{\mathbb{V}\mathrm{ar}}
\def\V{\var}
\def\Var{\var}
\def\cov{\mathbb{C}\mathrm{ov}}
\def\Var{\var}
\def\Cov{\cov}
\author{
W{\l}odek  Bryc
}
\address{
Department of Mathematics,
University of Cincinnati,
PO Box 210025,
Cincinnati, OH 45221--0025, USA}
\email{Wlodzimierz.Bryc@UC.edu}
\author{Jacek Weso{\l}owski}
\address{ Faculty of Mathematics and Information Science\\
Warsaw University of Technology\\ pl. Politechniki 1\\ 00-661
Warszawa, Poland}
\email{wesolo@alpha.mini.pw.edu.pl}
\keywords{bridges, harnesses, L\'evy-Meixner processes, quadratic conditional variances }
\subjclass[2000]{60J25 }
\colorlet{shadecolor}{gray!20}
\title{
Bridges of quadratic harnesses
}
\begin{document}
\begin{abstract}
Quadratic harnesses are typically non-homogeneous Markov processes with time-dependent state space. Motivated by a question raised in \cite[(4.4)]{Emery:2004} we give explicit formulas for bridges of such processes.
Using an appropriately defined \ftransf\  we show that  all bridges of a given quadratic harness can be transformed into other standard quadratic harnesses.
Conversely, each such bridge is an \ftransf\  of a standard  quadratic harness.
We describe quadratic harnesses that correspond to
  bridges of some L\'evy processes. We  determine all quadratic harnesses that may arise from stitching together  a pair of $q$-Meixner processes.

\end{abstract}

\maketitle

\longcomment{ Jacka oznaczenie na mosty:
$Z^{(r,a)\to(v,b)}$

czyli my robimy

$(Z^{(r,a)\to(v,b)})^f$

}
\section{Introduction}
The celebrated Paul L\'evy's Brownian bridge   of the Wiener process $(W_t)$   is $X_t=W_t-tW_1$, $t\in[0,1]$. It is clear that $X_0=X_1=0$, that the trajectories are continuous, and that
 $(X_t)$ is a Gaussian process with mean zero and covariance $E(X_sX_t)=s(1-t)$ for $0\leq s\leq t\leq 1$.
A well-known transformation (see, e.g., \cite[pg 68]{Billingsley:1968})
\begin{equation}\label{BB2W}
Y_t=(1+t)X_{t/(1+t)}
\end{equation}
converts the Brownian bridge into another Wiener process.

 In this paper we extend this representation to bridges of  a class of processes with linear regressions and quadratic conditional variances which %
we
 call  quadratic harnesses, using  an {\em \ftransf} of a stochastic process introduced %at the beginning of Section \ref{Sect:MR}.
 in Definition \ref{Def:f-transf}.
 Our main result, Theorem \ref{P B2},  shows how to transform a bridge of a quadratic harness   into another
 quadratic harness   on $(0,\infty)$. The inverse of this \ftransf\ gives an explicit formula for the bridge in terms of a quadratic harness; the formula is more involved but similar in spirit to \cite[(4.4)]{Emery:2004}.
 It is conjectured, and confirmed in many specific instances, that such quadratic harnesses are determined uniquely.
We give two  applications of Theorem \ref{P B2}: in Section \ref{Sect:conditioning} we describe quadratic harnesses that arise as \ftransf s of bridges
  of Meixner processes.  In Section \ref{Sec:gluing} we determine parameters of all  quadratic harnesses that may arise from a  construction
   that stitches together pairs of Meixner processes with random parameters.
Theorem \ref{P-chi2QH}  shows how the parameters of a quadratic harness  change under  group action of the affine transformations.
Section \ref{sect:proofs} contains  more technical proofs.

\subsection{Bridges}A heuristic description of a bridge   of a  process $(Z_t)$ is that  it
behaves like $(Z_t)$  conditioned to start at time $r$ at a prescribed point $z_r$  and to
end at time $v$  at a prescribed point $z_v$.
We formalize this intuition  using finite-dimensional distributions.

For  $t_1<t_2<\dots<t_m$ in an open  interval $\TT\subset(-\infty,\infty)$, denote $\un{Z}_{\mathbf t}=(Z_{t_1},\dots,Z_{t_m})$.
 \begin{definition}\label{Def-Bridge} A stochastic process $(X_t)_{t\in(r,v)}$ %
 is a bridge between points
 $(r,z_r)$ and $(v,z_v)$ of a process $(Z_t)_{t\in \TT}$ such that $(r,v)\subset \TT$,
  if:
 \begin{enumerate}
   \item $\lim_{t\to r+}X_t=z_r$ and  $\lim_{t\to v-}X_t=z_v$  in probability.
   \item
For   $r<t_1<t_2<\dots<t_m<v$    the distribution of the vector $\un{X}_{\bf t}$ is absolutely continuous with respect to the distribution of the vector $\un{Z}_{\bf t}$.
   \item
For   $r<s_1<\dots<s_k<t_1<t_2<\dots<t_m<u_1<u_2<\dots<u_n<v$  and integrable function $g$, if
\begin{equation}\label{BB-cond-lawsZ}
 \E(g(\un{Z}_{\mathbf t})|\un{Z}_{\mathbf s},\un{Z}_{\bf u})=h(\un{Z}_{\mathbf s},\un{Z}_{\bf u}),
\end{equation}
then
\begin{equation}\label{BB-cond-lawsX}
 \E(g(\un{X}_{\mathbf t})|\un{X}_{\mathbf s},\un{X}_{\bf u})=h(\un{X}_{\mathbf s},\un{X}_{\mathbf u}).
\end{equation}
(The equalities hold almost surely on the respective probability spaces.)
 \end{enumerate}
 \end{definition}
 (This definition in distribution neglects properties of trajectories that one may want to keep.)

The "standard"  construction of bridges of Markov processes based on Doob  $h$-transform and    duality is presented in \cite[Proposition 1]{Fitzsimmons:1993}.
A Feller property framework for existence of bridges appears in \cite{chaumont2009markovian}, see also \cite{Barczy:2005}.
There is also a related construction of more general "reciprocal processes" in  \cite{jamison1974reciprocal}, where one may prescribe the initial and final laws instead of the point masses. These constructions usually assume time-homogeneous Markov property, and/or existence of a $\sigma$-finite reference measure.
  However, the processes we are interested in are often not time-homogeneous, and  the densities with respect to a fixed $\sigma$-finite measure may fail to exist.
     We therefore prove the following proposition under assumptions that fit well Markov processes that are quadratic harnesses, see Remark \ref{Rem:Bridges}.

\longcomment{Jacek, Tuesday, April 12, 2011  11:36. Byc moze w sformulowaniu Prop. 1.2 pownnismy byli napisac jak ten
proces X sie konstruuje przywolujac odpowiednie wzory na $\nu_t$ i
$\nu_{s,x,t}$ z dowodu.}

\begin{proposition}\label{PEB}
 Suppose $(Z_t)_{t\in \TT}$ is a (non-homogeneous) Markov process with univariate distributions $\pi_t$. We  assume that there is a family of Borel sets $\{M_t: t\in \TT\}$ such that $\pi_t(M_t)=1$ and that $Z_t$ has transition probabilities $P_{s,t}(x,dy)$ defined for $x\in M_s$.
  We assume that
  \begin{itemize}
\item[(i)]
 transition probabilities  $P_{s,t}(x,dy)$
 are absolutely continuous with respect to the univariate laws $\pi_t$: for $x\in M_s$ and $s<t$, the transition probabilities are
 \begin{equation}\label{Pp}
 P_{s,t}(x,dy)=p(s,x;t,y) \pi_t(dy),
\end{equation}
where $(x,y)\mapsto p(s,x;t,y)$ is a measurable function $M_s\times M_t\to [0,\infty)$.
\item [(ii)]  there are  $r<v$ in $\TT$ and a pair $(z_r,z_v)\in M_r\times M_v$ such that $0<p(r,z_r;v,z_v)<\infty$, and for any $\eps>0$, we have
\begin{eqnarray}
\lim_{t\to r^+} \int_{\{y: |y-z_r|>\eps\}}p(r,z_r; t,y)p(t,y; v,z_v)\pi_t(dy)&=&0, \label{Brzeg1}
\\
\lim_{t\to v^-} \int_{\{y: |y-z_v|>\eps\}}p(r,z_r; t,y)p(t,y; v,z_v)\pi_t(dy)&=&0. \label{Brzeg2}
\end{eqnarray}
\end{itemize}
Then there is a Markov process $(X_t)_{t\in(r,v)}$  which is a bridge  between points $(r,z_r)$ and $(v,z_v)$ of the process $(Z_t)$.
\end{proposition}
The proof appears in Section \ref{ProofPEB}.

\begin{remark}\label{Rem:Bridges}
Condition (ii) of Proposition \ref{PEB} holds under the following assumptions.  Suppose that the following integrals exist:
$$a(t)= \int  yp(r,z_r; t,y)p(t,y; v,z_v)\pi_t(dy)   $$
$$b(t)=\int  (y-m_t)^2  p(r,z_r; t,y)p(t,y; v,z_v)\pi_t(dy) $$
and that with $m(t)=a(t)/p(r,z_r;v,z_v)$, $\sigma^2(t)=b(t)/p(r,z_r;v,z_v)$ we have
\begin{equation}\label{QHQHQH}
\lim_{t\to r^+} m(t)=z_r,\; \lim_{t\to v^-} m(t)=z_v,\;
\lim_{t\to r^+} \sigma^2(t)=\lim_{t\to v^-} \sigma^2(t)=0.
\end{equation}
Then \eqref{Brzeg1}  and  \eqref{Brzeg2} hold.

 In particular,  conditions  \eqref{QHQHQH} are easy to verify for quadratic harnesses, since  \eqref{LR} below implies
\begin{equation}\label{m(t)}
m(t)=  \frac{t-r}{v-r}z_v+\frac{v-t}{v-r}z_r ,
\end{equation}
and \eqref{EQ:q-Var} below implies that there is a constant $C=C(r,v,z_r,z_v)$ such that
\begin{equation}\label{sigma(t)}
\sigma^2(t)=C(v-t)(t-r).
\end{equation}
\end{remark}

\begin{remark}\label{R13} We will also want to consider one-sided "bridges" corresponding to  $v=\infty$. For a process $(Z_t)$ such that $\lim_{t\to\infty}Z_t/t=0$ in probability, we define a one sided bridge from $(r,z_r)$ as  time-inversion of the bridge between $(0,0)$ and $(1/r,z_r/r )$ of  the process $(tZ_{1/t})$.
\end{remark}

\subsection{Quadratic harnesses}\label{Sect:QH}
Throughout the paper the past-future filtration $(\calF_{s,t})$ is a family of sigma fields with $s<t$ from a nonempty open %
  interval $\TT=(T_0,T_1)\subset(-\infty,\infty)$ such that
$\calF_{r,u}\subset \calF_{s,t}$ for $r,s,t,u\in \TT$ with $r\leq s\leq t\leq u$.
We allow $T_0=-\infty$ or $T_1=\infty$.

An integrable stochastic process  $\X=\{X_t: \;{t\in \TT}\}$ such that
$X_s, X_t$ are   $\calF_{s,t}$-measurable, is called a
harness \cite{Hammersley,Mansuy-Yor-05,Williams73} on $\TT$  with respect to $(\calF_{s,t})$ if  for any $s,t,u\in \TT$ with $s<t<u$,
\begin{equation}\label{LR}
\E(X_t|\calF_{s,u})=\frac{u-t}{u-s}X_s+\frac{t-s}{u-s}X_u.
\end{equation}
All integrable L\'evy processes are harnesses with respect to their natural past-future filtration (\cite[(2.8)]{Jacod-Protter-88}); additional
examples  are mentioned after Definition \ref{Def_1}.

For a square-integrable process, a natural second-order analog of \eqref{LR}   is the additional requirement that $\Var(X_t|\calF_{s,u})$ is a quadratic function of $X_s, X_u$.
It turns out that under additional assumptions, conditional variance of such a process is given by   expressions  \eqref{EQ:q-Var} and \eqref{Ftsu} below, see \cite[Theorem 2.2]{Bryc-Matysiak-Wesolowski-04}.  This motivates the following definition which restricts the form of such quadratic functions to a slightly more general parametric expression.

\begin{definition}\label{Def_1}
We will say that a square-integrable stochastic process $\X=(X_t)_{t\in \TT}$ is a {\em quadratic harness} on $\TT$  with respect to $(\calF_{s,t})$,
if $(X_t)$ is a harness, and there are six constants $\chi,\eta,\theta,\sigma,\tau,\rho$ and a non-random function $F_{t,s,u}$ of $s<t<u$ such that
 \begin{multline}\label{EQ:q-Var-chi}
\V [X_t|\mathcal{F}_{s,u }]
= F_{t,s,u}\left( \chi+\eta \frac{uX_s-sX_u}{u-s} +\theta\frac{X_u-X_s}{u-s}
\right. \\
 \left.
+ \sigma
\frac{(uX_s-sX_u)^2}{(u-s)^2}+\tau\frac{(X_u-X_s)^2}{(u-s)^2}+\rho\frac{(X_u-X_s)(uX_s-sX_u)}{(u-s)^2} \right).
\end{multline}

 We will say that $(X_t)$ is a {\em standard} quadratic harness, if $\chi\ne 0$,  $\TT\subset(0,\infty)$, and
 \begin{equation}\label{EQ:cov}
\E(X_t)=0,\: \E(X_sX_t)=\min\{s,t\}.
\end{equation}
\end{definition}
Examples of quadratic harnesses on $(0,\infty)$ are five L\'evy processes  with quadratic conditional variances from \cite{Wesolowski93}.
Other examples include the classical versions of  some free L\'evy processes (\cite[Theorem 4.3]{Bryc-Wesolowski-03}), classical versions of $q$-Brownian motion (\cite[Theorem 4.1]{Bryc-Wesolowski-03}),
 bi-Poisson process %\cite{Bryc-Matysiak-Wesolowski-04b,Bryc-Wesolowski-05,Bryc-Wesolowski-04},
\cite{Bryc-Matysiak-Wesolowski-04b,Bryc-Wesolowski-05}, and Markov   processes with Askey-Wilson laws (\cite[Theorem 1.1]{Bryc-Wesolowski-08}).

For standard quadratic harnesses it will be  convenient to  re-write \eqref{EQ:q-Var-chi} with  $\rho=\gamma-1$, so that
\begin{multline}\label{EQ:q-Var}
\V [X_t|\mathcal{F}_{s,u }]
= F_{t,s,u}\left( 1+\eta \frac{uX_s-sX_u}{u-s} +\theta\frac{X_u-X_s}{u-s}\right. \\
+ \sigma
\frac{(uX_s-sX_u)^2}{(u-s)^2} \left.
+\tau\frac{(X_u-X_s)^2}{(u-s)^2}
-(1-\gamma)\frac{(X_u-X_s)(uX_s-sX_u)}{(u-s)^2} \right).
\end{multline}
(This change of notation  matches better   matrix representation  in Section \ref{Sect:tranf2std} and is consistent with   \cite[Theorem 2.2]{Bryc-Matysiak-Wesolowski-04}.)
By taking the expected value of \eqref{EQ:q-Var}, see Proposition \ref{P-normalization}, we have
\begin{equation}\label{Ftsu}
F_{t,s,u}=\frac{(u-t)(t-s)}{u(1+s\sigma)+\tau-s\gamma}.
\end{equation}

When we want to indicate the parameters of a standard quadratic harness, referring to representation \eqref{EQ:q-Var} we shall write $\X \in QH(\eta,\theta;\sigma,\tau;\gamma)$.
Unless specified otherwise, we will use the natural past-future filtration  $(\calF_{s,t})$, that is
$\calF_{s,t}=\sigma(X_r: r\in\left((0,s]\cup[t,\infty)\right)\cap\TT)$.

The importance of the standard form of a quadratic harness lies in the following.
\begin{conjecture}\label{Conj}
The parameters $\eta,\theta, \sigma,\tau,\gamma$ of a standard quadratic harness
on $(0,\infty)$ determine uniquely its finite dimensional distributions.
\end{conjecture}
This conjecture is known to be true under additional, seemingly technical, conditions that include restricting the range of values of the parameters, see \cite{Bryc-Matysiak-Wesolowski-04,Bryc-Wesolowski-03,Bryc-Matysiak-Wesolowski-05,Wesolowski93}.
In particular, for $\tau\geq 0$ and $-1\leq \gamma\leq 1$, a quadratic harness in $QH(0,\theta;0,\tau;\gamma)$ is a Markov process, called $q$-Meixner process in
\cite{Bryc-Wesolowski-03};  for  the Meixner processes with $\gamma=1$, see \cite{Wesolowski93}.

We now introduce additional notation for the right hand side of \eqref{EQ:q-Var}. For a (random or deterministic) real function $\X$ of real  parameter $t$, denote
$$
\un{\Delta}_{s,u}(\X)=\wekt{\Delta_{s,u}(\X)}{ \widetilde{\Delta}_{s,u}(\X)}$$
with
\begin{equation}\label{DD}
\Delta_{s,u}(\X)=\frac{X_u-X_s}{u-s}\; \mbox{ and } \; \widetilde\Delta_{s,u}(\X)=\frac{uX_s-sX_u}{u-s}.
\end{equation}
In the sequel, if $\X$ is clear from the context,   we will  write
$\un{\Delta}_{s,u}$ instead of  $\un{\Delta}_{s,u}(\X)$.

The right hand side of  \eqref{EQ:q-Var} is a multiple of a quadratic polynomial in two real variables  which we will write  as
\begin{equation}\label{Eq:K}
K(\un{\Delta}_{s,u})= 1+\eta \widetilde{\Delta}_{s,u}+\theta\Delta_{s,u}
+ \sigma
\widetilde{\Delta}_{s,u}^2+\tau\Delta_{s,u}^2-(1-\gamma)\Delta_{s,u} \widetilde{\Delta}_{s,u}.
\end{equation}
We will write $K(a,b)$ instead of $K\left(\wekt{a}{b}\right)$.

\section{\ftransf\ of a bridge}\label{Sect:MR}

 Formula  \eqref{BB2W} describes a bridge $\X$ between points $(0,0)$ and $(1,0)$ of a quadratic harness $(W_t)\in QH(0,0;0,0;1)$ by indicating how to transform it into
 another quadratic harness
  $\bV\in QH(0,0;0,0;1)$ on $(0,\infty)$. This relation can be generalized as follows.

  Let $f:{\RR}^2\to{\RR}^2$ be a non-degenerate affine function $f:{\RR}^2\to{\RR}^2$, written in matrix notation as
\begin{equation}
  \label{f} f(x,y)=[x,y]A+\un{m}^T\;,
\end{equation}
where $\un{m}^T=[m_1,m_2]$ and
\begin{equation}
  \label{Af}
  A=\left[\begin{matrix}
  a & b \\
  c & d
\end{matrix}\right]\in GL_2(\RR).
\end{equation}
Let  $\varphi (t)=(at+b)/(ct+d)$ be the associated  M\"obius transform, $t\in\RR\setminus\{ -d/c\}$. We  note that $\varphi $ is increasing  on  subintervals of  $\RR\setminus\{ -d/c\}$ if  $\det(A)>0$, and $\varphi $ is decreasing otherwise.

\begin{definition}\label{Def:f-transf}
If  $\X=(X_t)_{t\in\TT}$ is a stochastic process on an open interval $\TT\subset \varphi(\RR\setminus\{ -d/c\})$, and $f$ is the function in \eqref{f},  we define the {\em \ftransf}  $\X^f$ of the
stochastic process $\X$ as the process $\Y=\X^f$ on  the open interval $\ST=\varphi ^{-1}(\TT)$ such that
\be\label{xf} Y_t=(ct+d)X_{\varphi (t)}+\langle \un{t},\un{m}\rangle,\; t\in \ST,  \ee
where $\langle \un{a},\un{b}\rangle=\un{a}^T\un{b}$.
\end{definition}

For example,
 formula  \eqref{BB2W} describes \ftransf\  $\mathbf{V}=\X^f$ with
 $a=c=d=1$, $b=m_1=m_2=0$. Another well known \ftransf\  is
 time inversion $Y_t=t X_{1/t}$ which can be written as $\Y=\X^f$ with $b=c=1$, $a=d=m_1=m_2=0$. Similarly, re-scaling transformation %
 $Y_t=\alpha X_{\beta t}$ with $\alpha,\beta\ne 0$
 can be written as $\Y=\X^f$ with
non-zero parameters given by %
$a=\alpha\beta$, $d=\alpha$.

A calculation verifies that as long as the time domains of the processes match,
\begin{equation}\label{fog}
(\X^f)^g= \X^{g\circ f}.
\end{equation}

This allows us to build more complicated transformations from simpler  components, and gives us the flexibility to consider either $\Y=\X^f$ or $\X=\Y^{f^{-1}}$ as needed.

By \cite[Theorem 1(i)]{Wesolowski93},  see also \cite[Theorem 4.2]{Bryc-Wesolowski-03} and \cite{Plucinska83}, if $\bV$ is a quadratic harness in $QH(0,0;0,0;1)$ on $(0,\infty)$ then $\bV$ is the Wiener process.
 So formula  \eqref{BB2W} says that if $\X$ is a Brownian bridge then an appropriate \ftransf\  $\mathbf{V}=\X^f$   gives  $\bV\in QH(0,0;0,0;1)$.
   Our next result is an analogous property of   bridges of more general quadratic harnesses.
For every bridge of a quadratic harnesses  we find an \ftransf\  turning this bridge into a standard quadratic harness on $(0,\infty)$.
\begin{theorem}
  \label{P B2}  Let $\Z=(Z_t)_{t\in\TT}$ be a standard quadratic harness with respect to its natural past-future filtration, with parameters
  specified by $\Z\in QH(\eta,\theta;\sigma,\tau;\gamma)$.
Fix $r<v$ in $\TT$  and
$z_r $, $z_v $ such that  there exits a bridge  $\X=(X_t)_{ t \in(r, v)}$ of process $\Z$
between points $(r,z_r)$ and $(v,z_v)$.
Denote
\begin{equation}\label{Deltas-Def}
\Delta_{r,v}=\frac{z_v-z_r}{v-r},\; \widetilde\Delta_{r,v}=\frac{v z_r-r z_v}{v-r},
\end{equation}
and assume that $K(\Delta_{r,v},\widetilde\Delta_{r,v})> 0$ (recall \eqref{Eq:K}).

Then  $v(1+r\sigma)+\tau-r\gamma >0$, so $\Y=\X^f$ with $f$ given by
\begin{equation}\label{AM-bridge}
A=\frac{\sqrt{v}}{(v-r)M}\macierz{1}{r}{1/v}{1},\; \un{m}^T=-\frac{\sqrt{v}}{(v-r)M}\left[\frac{z_v}{v},z_r\right],\;M^2= \frac{K(\Delta_{r,v},\widetilde\Delta_{r,v})}{v(1+r\sigma)+\tau-r\gamma}
\end{equation}
is well defined. Moreover,
  $\Y\in QH(  \widetilde \eta,\widetilde\theta;\widetilde\sigma,\widetilde\tau;\widetilde\gamma)$  on $(0,\infty)$ with
 \begin{equation}\label{theta_X}
\widetilde \eta= \frac{v\eta-\theta-2\tau \Delta_{r,v} +2\sigma v\widetilde \Delta_{r,v}   -(1-\gamma)(v \Delta_{r,v}-\widetilde \Delta_{r,v}   )}{\sqrt{v}M(v (r \sigma
   +1)+\tau-r \gamma )}\,,
\end{equation}
\begin{equation}\label{eta_X}
\widetilde \theta= \frac{\sqrt{v}\left({\theta -r\eta +2\tau  \Delta_{r,v} -2 r\sigma  \widetilde \Delta_{r,v}   -(1-\gamma )(\widetilde \Delta_{r,v}   -r \Delta_{r,v} )}\right)}{M(v (r \sigma
   +1)+\tau-r \gamma )}\,,
\end{equation}
\begin{equation}\label{Gamma_Xsigma}
\widetilde \sigma=\frac{\sigma   v^2+(1-\gamma  ) v+\tau  }{v(v
   (r \sigma  +1)+\tau-r \gamma   )}\;,
   \end{equation}
 \begin{equation}\label{Gamma_Xtau}
 \widetilde \tau=v \frac{\sigma  r^2+(1-\gamma  ) r+\tau  }{v (r \sigma
   +1)+\tau-r \gamma   }
,\end{equation}
\begin{equation}\label{Gamma_Xr}
\widetilde \gamma=\frac{v \gamma  -r (v \sigma  +1)-\tau  }{v (r \sigma
   +1)+\tau -r \gamma  }\,.
\end{equation}
\end{theorem}
The explicit formula for $\Y$ is
\begin{equation*} %
Y_t=\frac{\sqrt{v}}{(v-r)M}\left(\left(1+\tfrac{t}{v}\right)X_{(t+r)/(1+\tfrac{t}{v})}-(t\tfrac{z_v}{v}+z_r)\right).
\end{equation*}
Conversely,   we can express the bridge $\X$ in terms of a standard  quadratic harness $\Y\in QH(  \widetilde \eta,\widetilde\theta;\widetilde\sigma,\widetilde\tau;\widetilde\gamma)$  by an explicit formula
$$
X_t=a(t,v,r,z_r,z_v)Y_{v(t-r)/(v-t)}+b(t,v,r,z_r,z_v),
$$
compare \cite[Section (4.4)]{Emery:2004}. In Proposition \ref{P: bi-poisson} we describe bridges that can be obtained in this way from Poisson, negative binomial, gamma and hyperbolic secant processes.

The simplest way to handle the calculations needed for the proof of Theorem \ref{P B2} is to use matrix notation from Section \ref{Sec:MN}; the proof appears in Section \ref{Sect:proof_of_Thm1.1}.

\begin{example}
Markov process $\Z\in QH(0,0;0,0;0)$ is a classical version of the free Brownian motion, see \cite[Example 4.9]{BKS97} and  \cite[5.3]{Biane98}.
One can check that the assumptions of Proposition \ref{PEB} are satisfied, so the bridges of this process between  $(r,z_r)$ and $(v,z_v)$ exist for all $r<v$, with $z_r\in(-2\sqrt{r},2\sqrt{r})$  and  $z_v\in(-2\sqrt{v},2\sqrt{v})$. If $\X$ is such a bridge, then with $f$ from \eqref{AM-bridge}, the transformation $\X^f$ is a quadratic harness in $QH(\eta,\theta;\sigma,\tau;\gamma)$ with
$$
\eta=\frac{\widetilde\Delta_{r,v}-v\Delta_{r,v}}{v\sqrt{1-\Delta_{r,v}\widetilde\Delta_{r,v}}},
\;
\theta=\frac{r\Delta_{r,v}-\widetilde\Delta_{r,v}}{\sqrt{1-\Delta_{r,v}\widetilde\Delta_{r,v}}},
\; \sigma=\frac{1}{v}, \;\tau=r,\;\gamma=-\frac{r}{v}.
$$
These are free quadratic harnesses, $\gamma=-\sigma\tau$, without atoms from
\cite{Bryc-Matysiak-Wesolowski-05,Szpojankowski:2010}.
\end{example}

%\begin{remark}\label{Rem-ET} The \ftransf\  into a "standard form" is   unique up to composition with the following two elementary \ftransf s that preserve \eqref{EQ:cov}.
%It is easy to check that  if  $\Y \in QH(\eta,\theta;\sigma,\tau;\rho)$ and $Z_t=\la Y_{t/\la^2}$ with $\la \ne0$, then
%\begin{equation}\label{EQ:equiv}
%\Z \in QH(\eta/\la,\la\theta;\sigma/\la^2,\la^2\tau;\rho).
%\end{equation}
%The second elementary \ftransf\  is time inversion $Z_t=tY_{1/t}$, which produces $\Z \in QH(\theta,\eta;\tau,\sigma;\rho)$, i.e. the entries in pairs $(\eta,\theta)$ and $(\sigma,\tau)$ are swapped.
%\end{remark}
\comment{wstawka: wymaga uzgodnienia notacji}
The \ftransf\  into the "standard form" is   unique up to some special \ftransf.
\begin{proposition}  \label{Prop-ET} Suppose $f$ is given by \eqref{f} with $\un{m}=0$.
If $ \Y $ is a standard quadratic harness then $\Y^f$ is also a standard quadratic harness iff  either $b=c=0$ and  $ad=1$ or
$a=d=0$ and  $bc=1$.

In addition, if  $\Y\in QH(\eta,\theta;\sigma,\tau;\rho)$, then
\begin{equation}\label{EQ:equiv}
\Y^f \in QH(\eta/\la,\la\theta;\sigma/\la^2,\la^2\tau;\rho).
\end{equation}
with $\lambda=d$  in the first case, and
$$\Y^f \in QH(\theta/\la,\la\eta;\tau/\la^2,\la^2\sigma;\rho).$$
 with  $\lambda=c$ and swapped $\theta$, $\eta$ and  $\tau$,
$\sigma $.
\end{proposition}
\begin{proof}
 This is of course an elementary calculation, but the simplest way is to apply \eqref{cov transf} to check that
 $$\E( Y_s^f Y_t^f)= \begin{cases}
  (as +b ) (ct+d )   & \mbox{if $ad-bc>0$} \\
  (at +b ) (cs+d )
& \mbox{if $ad-bc<0$}
 \end{cases} $$
This proves the first part. The second part is again an elementary calculation that is included in the more general formula \eqref{Gamma A}.
\end{proof}
\comment{koniec wstawki}

For bridges between points $(0,0)$  and  $(v,z_v)$ and for one-sided bridges  from $(r,z_r)$, formulas for parameters   correspond to Theorem \ref{P B2} after taking $(r,z_r)=(0,0)$ or by taking the limit  as $ v\to\infty$, while keeping $z_r,z_v$ fixed.   These two situations are described in the following remark.
\begin{remark}\label{Remark onesided bridges} Suppose $\Z$ is a quadratic harness  on $(0,\infty)$ in $QH(\eta,\theta;\sigma,\tau;\gamma)$.
\begin{itemize}
\item[(i)]
Let  $\X=(X_t)_{t\in(0, v)}$ be the bridge between points $(0,0)$ and $(v,z_v)$ of the process $\Z$. Assume that
$$
\kappa^2=\left(1+\frac{\tau}{v}\right) \left(1+\theta \frac{z_v}{v}+\tau \frac{z_v^2}{v^2}\right)>0.
$$
Then process $\Y=\X^f$ with
$$A=\frac{v+\tau}{v\kappa}\macierz{1}{0}{1/v}{1} ,\; \un{m}=-\frac{v+\tau}{v\kappa}\wekt{{z_v}/{v}}{0},$$ written explicitly as
\begin{equation}\label{EQ:transform_me_again}
Y_t=\frac{v+\tau}{v\kappa} \left((1+\tfrac{t}{v}) X_{vt/(t+v)}-\tfrac{t}{v}z_v\right),
\end{equation}

is a quadratic harness with parameters
\begin{eqnarray}
   \theta_Y &=& \frac{\theta  +2\tau z_v/v}{\kappa} \label{THETA-Y}%
   \,,\\
   \eta_Y &=& \frac{v\eta-\theta-2\tau z_v/v-(1-\gamma)z_v}{v\kappa} \label{eta_XV}\,,\\
\tau_Y&=&\frac{v \tau  }{v +\tau } \label{tau_XV}\,, \label{TAU-Y}\\
\sigma_Y&=&\frac{\sigma   v^2+(1-\gamma  )v+\tau}{ v(v+\tau)} \label{sigma_XV}\,,\\
\gamma_Y&=&\frac{v \gamma  -\tau  }{v +\tau }\label{gamma_XV}\,.
\end{eqnarray}

\item[(ii)] Let  $\X=(X_t)_{ t\in(r,\infty)}$ be the one-sided bridge  from $(r,z_r)$ of process $\Z$, see Remark \ref{R13}. Assume that
$\kappa^2=(1 +r\sigma) (1+\eta z_r+\sigma z_r^2)>0$.
For $0<t<\infty$, let
$\Y=\X^f$ with
$$
A=\tfrac{1+r\sigma}{\kappa}\macierz{1}{r}{0}{1},\; \un{m}=-\tfrac{1+r\sigma}{\kappa}\wekt{0}{z_r},
$$
which can be explicitly written as
\begin{equation}\label{YYY2B}
Y_t=\tfrac{1+r\sigma}{\kappa}(X_{t+r}-z_r).
\end{equation}

Then $\Y $ is a quadratic harness with parameters
\begin{eqnarray}
   \theta_Y &=& \displaystyle\frac{\theta -r\eta -2 r\sigma  z_r-(1-\gamma )z_r}{\kappa} \label{theta_XR}\,,\\
   \eta_Y  &=& \displaystyle\frac{\eta+2\sigma z_r}{\kappa} \label{eta_XR}\,,\\
   \tau_Y&=& \frac{\sigma  r^2+(1-\gamma  ) r+\tau  }{1 +r\sigma} \label{tau_XR}\,,\\
\sigma_Y&=&\frac{\sigma  }{1 +r\sigma}
 \label{sigma_XR}\,,\\
\gamma_Y&=&\frac{\gamma  -r  \sigma  }{1 +r\sigma}  \label{gamma_XR}\,.
\end{eqnarray}

 \end{itemize}
\end{remark}

Next we present  relations between the parameters of the original quadratic harness that are preserved by the \ftransf\   \eqref{AM-bridge} of its bridge.
\begin{proposition}\label{P Cond inv}  Let $\X$ be a  bridge  between points $(r,z_r)$ and $(v,z_v)$ of a process $\Z\in QH(\eta_Z,\theta_Z;\sigma_Z,\tau_Z;\gamma_Z)$.
Let $\Y=\X^f$  with $f$ defined in  \eqref{AM-bridge}, so that  $\Y\in  QH(\eta_Y,\theta_Y;\sigma_Y,\tau_Y;\gamma_Y) $.
\begin{itemize}
\item[(i)]  Then $\gamma_Z=-1$ if and only if $\gamma_Y=-1$.
 \item[(ii)]  If $\gamma_Z>-1$, then
\begin{equation}
  \label{Cond inv}
  \frac{(1-\gamma_Y)^2-4\sigma_Y\tau_Y}{(1+\gamma_Y)^2}=\frac{(1-\gamma_Z)^2-4\sigma_Z\tau_Z}{(1+\gamma_Z)^2}.
\end{equation}
 \item[(iii)] If  $(r,z_r)=(0,0)$, then
 \begin{equation}\label{signs}
   \rm{sign}(\theta_Y^2-4\tau_Y)= \rm{sign}(\theta_Z^2-4\tau_Z)\;.
 \end{equation}
\end{itemize}
\end{proposition}
\begin{proof} The first statement follows from \eqref{Gamma_Xr}.
Formula \eqref{Cond inv} follows by a direct computation from (\ref{Gamma_Xsigma}--\ref{Gamma_Xr}).
Formula \eqref{signs} follows from  formulas  \eqref{THETA-Y} and \eqref{TAU-Y}, which give
$\theta_Y^2-4\tau_Y=(\theta_Z^2-4\tau_Z)/\kappa^2$.
\end{proof}

\subsection{Application: bridges of Meixner processes}\label{Sect:conditioning}
By the Meixner processes we mean L\'evy processes for which the univariate laws belong to the natural exponential families with quadratic variance functions, \cite{Morris:1982}. This class consists of    the Wiener, Poisson, negative binomial, gamma, and hyperbolic secant processes. Some authors use the name Meixner process for the  hyperbolic secant process only;
\cite[Section 4.3]{schoutens2000stochastic} calls the whole class the L\'evy-Meixner systems.
According to   \cite{Wesolowski93}, up to affine transformations this class coincides with the standard quadratic harnesses such that $\eta=\sigma=0$ and $\gamma=1$.

From Proposition \ref{PEB} together with Remark \ref{Rem:Bridges}, see also \cite[Proposition 1]{Fitzsimmons:1993}, it follows that bridges with endpoints in the support of the univariate laws exist for all five quadratic harnesses in $QH(0,\theta;0,\tau;1)$.
Such bridges are quadratic harnesses, and from Theorem \ref{P B2}  we read out their standard form.
It turns out that their parameters satisfy relations $0\leq \sigma\tau<1$, $\gamma=1-2\sqrt{\sigma\tau}$ and $\eta\sqrt{\tau}+\theta\sqrt{\sigma}=0$.

\begin{proposition}\label{P cond}
 Fix $\tau\geq 0$ and $\theta\in\RR$. Suppose  $\Z\in QH(0,\theta;0,\tau;1)$ i.e., $\Z$ is a Meixner process. Let $\X$ be a bridge  between points $(r,z_r)$ and $(v,z_v)$ of $\Z$, and $\Y=\X^f$ with
 $$
 A=\frac{\sqrt{v-r+\tau}}{(v-r)\sqrt{1+\theta\Delta_{r,v}+\tau\Delta_{r,v}^2}}\macierz{v}{r}{1}{1},$$
 $$
 \un{m}=-\frac{\sqrt{v-r+\tau}}{(v-r)\sqrt{1+\theta\Delta_{r,v}+\tau\Delta_{r,v}^2}}\wekt{z_v}{z_r},
 $$
 (recall \eqref{Deltas-Def}), i.e., explicitly
  \begin{equation}\label{31415926}
  Y_t=\frac{\sqrt{v-r+\tau}}{(v-r)\sqrt{1+\theta\Delta_{r,v}+\tau\Delta_{r,v}^2}}\left((1+t)X_{(vt+r)/(1+t)}- t z_v- z_r\right).
\end{equation}

 Then $\Y\in QH(\eta_Y,\theta_Y;\sigma_Y,\tau_Y;\gamma_Y)$  with parameters
 \begin{equation}
  \label{Cond meixner I}
  \gamma_Y=\frac{v-r-\tau}{v-r+\tau},\; \tau_Y=\sigma_Y=\frac{\tau}{v-r+\tau}\,,
\end{equation}
\begin{equation}
  \label{Cond meixner II}  \theta_Y =-\eta_Y=\frac{\theta+2 \tau\Delta_{r,v}}{\sqrt{v-r+\tau}\sqrt{1+\theta\Delta_{r,v}+\tau\Delta_{r,v}^2}}\,.
  \end{equation}

\end{proposition}
\begin{proof}
  From Theorem  \ref{P B2} we read out the transformation that leads to parameters
 $$\gamma'=\frac{v-r-\tau}{v-r+\tau},\; \tau'=\ \frac{\tau v}{v-r+\tau},\;
\sigma'=\frac{\tau}{v(v-r+\tau)},$$
$$
 \theta' =\sqrt{v}\frac{\theta+2 \tau\Delta_{r,v}}{\sqrt{v-r+\tau}\sqrt{1+\theta\Delta_{r,v}+\tau\Delta_{r,v}^2}},$$
 $$
 \eta'=-\frac{\theta+2 \tau\Delta_{r,v}}{\sqrt{v}\sqrt{v-r+\tau}\sqrt{1+\theta\Delta_{r,v}+\tau\Delta_{r,v}^2}}\,.$$
 Composing the \ftransf\  from  \eqref{EQ:equiv}  with the \ftransf\  from Theorem  \ref{P B2}, see \eqref{fog}, we get  \eqref{31415926} and parameters as claimed.
   \end{proof}

\longcomment{Jacek Tuesday, April 12, 2011  12:54: Brak podzielenia przez $\sigma_0\tau_0$ we wzorze na $v-r$  [poprawione].
W ogole $r$ jest tu niepotrzebne. Mozna bylo wziac $ r=0$.}

\begin{example}[Dirichlet process]\label{Example: Dirichlet harness}
For any $\sigma_0,\tau_0>0$ with $\sigma_0\tau_0<1$,   there exists a standard quadratic harness $\Y$ (namely, a Dirichlet process) on $(0,\infty)$ which has
 parameters $\sigma_Y=\sigma_0$, $\tau_Y=\tau_0$, $\theta_Y=2\sqrt{\tau_0}$, $\eta_Y=-2\sqrt{\sigma_0}$,
and $\gamma_Y=1-2\sqrt{\sigma_0\tau_0}$.

Indeed,  consider a bridge of  a  gamma process $(G_t)_{t>0}$.
The gamma process  $(G_t)$ is a non-negative   L\'evy processes with two
parameters $\alpha,\beta> 0$.
The density of $G_t$ is given by
\begin{equation}\label{G-d}
 \tfrac{\beta^{\alpha t}}{\Gamma(\alpha)} x^{\alpha t-1} e^{-\beta x}\1_{(0,\infty)}(x).
\end{equation}
As a L\'evy process,  $(G_t)$ is a harness with
mean
$\E(G_t)=t\alpha/\beta $ and variance $\Var(G_t)=t\alpha/\beta^2$.
It is also known (see \eqref{V-gamma} below) that
\begin{equation}\label{V-gammaG}
\Var(G_t|\mathcal{F}_{s,u})=\frac{(t-s)(u-t)}{(u-s)^2((u-s)/\alpha+1)}(G _u-G_s)^2\;.
\end{equation}
Then
$$
Z_t=\frac{\beta}{\alpha} G_{\alpha t} -  \alpha t
$$
is a quadratic harness in $QH(0,2/\alpha;0,1/\alpha^2;1)$ which by further scaling as in
\eqref{EQ:equiv} can be transformed into a quadratic harness in $QH(0,2;0,1;1)$.
So instead of considering bridges of $(G_t)$, we  consider bridges of $\Z\in QH(0,2;0,1;1)$. To apply Proposition \ref{P cond} we
choose $v-r=1/\sqrt{\sigma_0\tau_0}-1$ so that $1/(1+v-r)=\sqrt{\sigma_0\tau_0}$. From
\eqref{Cond meixner I} %
we have
$$
\gamma_Y=1-\frac{2}{v-r+1}=1-2\sqrt{\sigma_0\tau_0}\,.
$$
Transformation \eqref{EQ:equiv} with $\la^2=\sqrt{\sigma_0/\tau_0}$ gives
$$
\tau_Y=\sqrt{\sigma_0\tau_0}/a^2=\tau_0,\; \sigma_Y=a^2\sqrt{\sigma_0\tau_0}=\sigma_0.
$$
By   \eqref{Cond meixner II}  with any $\Delta=\Delta_{r,v}\geq 0$,
$$a\theta_Y=\frac{2(1+\Delta)}{\sqrt{ v-r+1}\sqrt{1+2\Delta+\Delta^2}}=
\frac{2}{\sqrt{ v-r+1}} = 2\sqrt[4]{\sigma_0\tau_0},$$
so $\theta_Y=2\sqrt{\tau_0}$. Similarly,
$\eta_Y/a=  -2\sqrt[4]{\sigma_0\tau_0}$, so $\eta_Y=-2\sqrt{\sigma_0}$.

Since bridges of the gamma process are Dirichlet processes,  the same conclusion can be obtained directly by a fairly natural reparameterization \eqref{X dirichlet} without invoking explicitly any of the transformations.
 Let $a_1,\ldots,a_n,a_{n+1}$ be
positive numbers. A Dirichlet distribution
is defined through its density
$$
f(x_1,\ldots,x_n)=\frac{\Gamma(a_1+\ldots+a_{n+1})}{\prod_{i=1}^{n+1}\Gamma(a_i)}\prod_{i=1}^n\:x_i^{a_i-1}
\left(1-\sum_{i=1}^nx_i\right)^{a_{n+1}}\1_{U_n}(x_1,\ldots,x_n)\;,
$$
where $U_n=\{(x_1,\ldots,x_n)\in
(0,\infty)^n:\;\sum_{i=1}^n\:x_i<1\}$. A stochastic process
$\X =(X_t)_{t\in(0,v )}$ is called a Dirichlet process if there
exists a finite nonzero measure $\mu$ on $(0,v )$ such that for any $n$ and
any $0=t_0< t_1<\ldots<t_n< v $ the distribution of the vector of
increments $(X _{t_1},X_{t_2}-X_{t_1},\ldots,X_{t_n}-X_{t_{n-1}})$
is Dirichlet
with  $a_j=\mu((t_{j-1},t_j])$, $1\leq j\le n$ and $a_{n+1}=\mu((t_n,v ))$.
This is one of the basic objects of non-parametric Bayesian
statistics - see %
 \cite{Doskum-1974,Ferguson-1973}. %
Let
$\mu=c\lambda$, where $\lambda$ is a Lebesgue measure on $(0,v )$
and $c=1/\alpha>0$ is a number. Recall that the beta distribution,
$B_I(a,b)$, is defined by the density
$$
f(x)=\frac{\Gamma(a+b)}{\Gamma(a)\Gamma(b)}x^{a-1}(1-x)^{b-1}\1_{(0,1)}(x)\,,
$$
and if $X\sim B_I(a,b)$ then \be
\label{beta}
\E(X)=\frac{a}{a+b},\quad
\mbox{and}\quad
\var(X)=\frac{ab}{(a+b)^2(a+b+1)}\;.
\ee

Since $X_t$ has the beta distribution $B_I(ct,c(v -t))$ the
formulas \eqref{beta} give
$$
\E(X_t)=\tfrac{t}{v }\quad\mbox{and}\quad
\Cov(X_s,X_t)=\tfrac{s(v -t)}{v ^2(cv+1)}\;.
$$
Note that to compute $\E(X_sX_t)$ it is convenient to use the
classical fact, that $X_s/X_t$ and $X_t$ are independent and
$X_s/X_t$ is a beta $B_I(cs,c(t-s))$ random variable. Note also
that $\X $ is a Markov process with transition distribution defined by
the fact that $({X_t-X_s})/({1-X_s})$ and $X_s$ are independent,
and $(X_t-X_s)/({1-X_s})$ is beta $B_I(c(t-s),c(v -t))$. It is
also known that
$$
\frac{X_t-X_s}{X_u-X_s}\sim B_I(c(t-s),c(u-t)).
$$
Moreover,  $(X_t-X_s)/(X_u-X_s)$ and $(X_s,X_u)$ are independent.
Therefore, from \eqref{beta} we get
$$
\E(X_t|\mathcal{F}_{s,u})=X_s+(X_u-X_s)\frac{t-s}{u-s}
$$
and thus $\X$ is a harness. The second formula in
\eqref{beta} gives
\begin{equation}\label{V-gamma}
\Var(X_t|\mathcal{F}_{s,u})=(X_u-X_s)^2\frac{(t-s)(u-t)}{(u-s)^2(c(u-s)+1)}\;.
\end{equation}
This is an example of quadratic harness with $\chi=0$.

Define now
\begin{equation}
  \label{X dirichlet}
  Y_t=\sqrt{\tfrac{1+cv}{v}}\left((v +t)X_{\frac{tv}{v +t}}-t\right),\quad
t\in(0,\infty)\;.
\end{equation}
Note that $\Y$ is an \ftransf\  of $\X$ with $f$ defined by
$$
A=\sqrt{\tfrac{1+cv}{v}}\macierz{v}{0}{1}{v},\; \un{m}=-\sqrt{\tfrac{1+cv}{v}}\wekt{1}{0}.
$$
It is elementary to check that $(Y_t)_{t\in(0,\infty)}$ is a
quadratic harness  and
the parameters are as follows
$$
\theta_Y=\frac{2\sqrt{v}}{\sqrt{1+cv}}\;,\quad\eta_Y=\frac{-2}{\sqrt{v} \sqrt{1+cv}}\;,
$$$$
\tau_Y=\frac{v}{1+cv}\;,\quad\sigma_Y=\frac{1}{v(1+cv)}\;,\quad\gamma_Y=1-\frac{2}{(1+cv)^2}\;.
$$
(Note that this agrees with the answers  deduced from Proposition \ref{P cond} which implies that $\theta_Y^2=4\tau_Y$, $\eta_Y^2=4\sigma_Y$ and
$\gamma_Y=1-2\sqrt{\sigma_Y\tau_Y}$.)

On the other hand, it can be easily seen that the process $\X$ is a
bridge of the gamma process $(G_t)_{t\in(0,\infty)}$ governed by
the gamma distribution with the shape parameter $1/c$ and the
scale equal $1$. More precisely,   process $\X$ is
equal in distribution to the gamma bridge
$(G_t/G_v )_{t\in(0,v)}|G_v \sim %
(G_t/G_v )_{t\in(0,v)}$ between points $(0,0)$ and $(v,1)$, see \cite[Definition 2]{Ferguson-1974}, see also \cite{Emery:2004}.

\end{example}

\begin{example}[Binomial process]\label{EX_Bin}
Fix  $n\in\NN$ and real $\eta_0,\theta_0$ such that  $\eta_0\theta_0=-1/n$. Then   there exist a  standard quadratic harness  $\Y$ (namely, the Binomial process described here)  on $(0,\infty)$  which has
 parameters $\sigma_Y=\tau_Y=0$, $\theta_Y=\theta_0$, $\eta_Y=\eta_0$,
and $\gamma_Y=1$.

Indeed,  consider   standard quadratic harnesses arising from bridges of a Poisson process.
 Poisson process $N_t$ with parameter $\la>0$ is a harness with mean $\E(N_t)=\la t$
 variance $\Var(N_t)=\la t$, and with
 conditional variance with respect to the natural past-future filtration given by
 $$
 \Var(N_t|\calF_{s,u})=\frac{(u-t)(t-s)}{(u-s)^2}(N_u-N_s).
 $$
 Then
 $$
 Z_t= N_{t/\la}- t
 $$
 is in $QH(0,1;0,0;1)$.
So instead of considering bridges of $(N_t)_{t>0}$ we   consider a bridge of $\Z$ between points $(0,0)$ and $(v,n)$.
From \eqref{Cond meixner I} we see that $\gamma_Y=1$, $\sigma_Y=\tau_Y=0$.
Since $\Delta_{0,v}=(n-v)/v$, from  \eqref{Cond meixner II} we see that
$\theta_Y=-\eta_Y=1/\sqrt{n}$. %
So
$\eta_Y\theta_Y=-1/n$.

The same conclusion can be obtained more directly without invoking explicitly any of the transformations. Let $b(m,p)$ denote the binomial distribution with sample size $m$ and probability of success $p$.
 For a fixed $n\in\NN$, define a Markov process
$\X=(X_t)_{t\in(0,v)}$ by the following (consistent) family of
marginal and conditional distributions:
$$
X_t\sim b\left(n,\tfrac{t}{v}\right)\quad\mbox{and}\quad
X_t-X_s|X_s\sim b\left(n-X_s,\tfrac{t-s}{v-s}\right),\;0< s\le
t<v\;.
$$
Then  process $\X$ is called a binomial process with parameter
$n$.  (Compare \cite[Proposition 4.4]{Bryc-Wesolowski-03}.)  It is elementary to see that the conditional distribution
$X_t|{\mathcal F}_{s,u}\sim
b\left(X_u-X_s,\frac{t-s}{u-s}\right)$. Therefore $\X$  is a harness i.e.  \eqref{LR} holds, and
for any
$s,t,u\in(0,v)$, $s<t<u$
$$
\Var(X_t|\mathcal{F}_{s,u})=\frac{(u-t)(t-s)}{(u-s)^2}(X_u-X_s)\;.
$$
This is again an example of quadratic harness with $\chi=0$.

Let
$$
Y_t=\frac{(v+t)X_{\frac{t v}{v+t}}-nt}{\sqrt{nv}}\;,\quad
t\in(0,\infty),
$$
i.e., $\Y$ is an \ftransf\  of $\X$ with $f$ defined by
$$
A=\tfrac{1}{\sqrt{nv}}\macierz{v}{0}{1}{v},\; \un{m}=\sqrt{\tfrac{n}{v}}\,\wekt{1}{0}.
$$
Then an easy computation  shows that  process $(Y_t)_{t> 0}$ is a quadratic harness  and  the parameters are
$\theta_Y=\sqrt{v/n}$, $\eta_Y=-1/\sqrt{ nv }$, $\tau_Y=\sigma_Y=0$ and
$\gamma_Y=1$.

On the other hand, it is immediate that $\X$ is a  bridge
obtained by conditioning a Poisson process $(N_t)_{t> 0}$ at $N_v=n$.

\end{example}

\subsection{Application: stitching construction}\label{Sec:gluing}
This section is motivated by the construction of a classical bi-Poisson process from a pair of two conditionally independent Poisson processes in \cite[Proposition 4.1]{Bryc-Wesolowski-05}, and by the construction of a quadratic harness from two conditionally independent  negative binomial  processes  in \cite[Proposition 5.1]{Maja:2009}.
These constructions essentially consist of choosing an appropriate   time $T$ and an appropriate law for random variable $Z_T$.
For $z_T$ in the support of $Z_T$, the  bridge $\X_+$ between $(0,0)$ and $(T,z_T)$ of process $\Z$
transforms into  a Poisson process (a negative binomial process)  by  \ftransf\ \eqref{EQ:transform_me_again}.
Similarly, the one-sided bridge  $\X_-$  from $(T, z_T)$  transforms into another  Poisson process
(another negative binomial process). The two Poisson processes, or the two negative binomial processes, used in the stitching construction are $Z_T$-conditionally independent.

 We use Theorem \ref{P B2} and Remark \ref{Remark onesided bridges}(i) to determine parameters of all quadratic harnesses that might arise from   such a stitching construction
 from  more general $q$-Meixner processes, which are quadratic harnesses that generalize the Meixner processes by allowing arbitrary $\gamma\in[-1,1]$, see  \cite{Bryc-Wesolowski-03}.
Namely, if a quadratic harness $\Z$ comes from stitching together two $q$-Meixner processes, then there is at least one pair $(T,z_T)$ such that the bridge $\X_-$ between $(0,0)$ and $(T,z_T)$ exists and  can be transformed back into a $q$-Meixner process $\Y$ with parameters given in Remark \ref{Remark onesided bridges}(i). The following result describes the parameters of the standard quadratic harness $\Z$ in such situation.
 \begin{proposition}\label{P: bi-poisson}
Let $\Z\in QH(\eta,\theta;\sigma,\tau;\gamma)$ be defined  on $(0,\infty)$. Suppose that there are  real numbers $T>0$ and $z_T$ such that the  bridge  $\X_-$   between points $(0,0)$ and $(T,z_T)$  of
 $\Z$, transforms by formula \eqref{EQ:transform_me_again}   into a $q$-Meixner process $\Y$. Then  $\Y$ is a Meixner process and one of the following cases must happen:
\begin{itemize}
\item[(i)]   $\gamma=1$, $\sigma=\tau=0$ and $\eta=\theta=0$;

\item[(ii)] $\gamma=1$, $\sigma=\tau=0$ and $\eta\theta>0$;
  \item[(iii)]  $\sigma,\tau> 0$,  $\gamma=1+2\sqrt{\sigma\tau}$ and  $\eta\sqrt{\tau}=\theta\sqrt{\sigma}$.  \end{itemize}
\end{proposition}
\begin{proof}

  The only possibility for \eqref{sigma_XV}  to correspond to a $q$-Meixner process is when the parameters of $\Z$ satisfy
\begin{equation}
  \label{!}\sigma T^2+(1-\gamma)T+\tau=0.
\end{equation}

Since $\sigma,\tau\geq 0$ (see \cite[Theorem 2.2]{Bryc-Matysiak-Wesolowski-04}),  the only solution with $\gamma\leq 1$ is $\gamma=1$, $\sigma=\tau=0$. Then from  \eqref{eta_XV}  we see that $\Y$ is indeed a Meixner process when we set $T=\theta/\eta$ %
 or when $T>0$ is arbitrary but $\eta=\theta=0$.

Other solutions of  \eqref{!} interpreted as a quadratic equation in $T$ are possible only when $(1-\gamma)^2\geq 4\sigma\tau$.
  However, since $\gamma\leq 1+2\sqrt{\sigma\tau}$ by \cite[Theorem 2.2]{Bryc-Matysiak-Wesolowski-04},  this gives  $\gamma= 1+2\sqrt{\sigma\tau}$ and $T=\sqrt{\tau/\sigma}$.
  Then   from    \eqref{eta_XV}, the coefficient at $z_T$ vanishes  when  $\eta\sqrt{\tau}=\theta\sqrt{\sigma}$, so $\Y$ is
   indeed a Meixner process.

\end{proof}
\begin{remark}
We expect that stitching constructions work in all of the cases described in Proposition \ref{P: bi-poisson}. Case (i) is trivial, with    $\Y$ being  the Wiener process. In case (ii)  $T=\theta/\eta$,   $\Y$ is  the Poisson processes with parameter $\lambda$ which depends on $z_T$ and the stitching construction is described in \cite[Proposition 4.1]{Bryc-Wesolowski-05}. In case (iii) with $T=\sqrt{\tau/\sigma}$, $\gamma=1+2\sqrt{\sigma\tau}$,  by  \eqref{signs} the sign of $\theta^2-4\tau$ is preserved. From \cite[Theorem 1]{Wesolowski93} we see that $\Y$ is either a negative binomial ($\theta^2>4\tau$), or a gamma ($\theta^2=4\tau$), or a hyperbolic secant  ($\theta^2<4\tau$) process. The stitching construction for the negative binomial process appears in  \cite[Proposition 5.1]{Maja:2009}.
\end{remark}

\section{Transforming quadratic harnesses into  standard form}\label{Sect:tranf2std}
Bridges of standard quadratic harnesses  are quadratic harness but they are not in the standard form because their means are affine functions of time and they have product covariances.
In our next theorem  we present \ftransf s \eqref{xf}  that convert
such quadratic harnesses  into  standard form.

\begin{theorem}\label{P-chi2QH}
Let $\X$ be a harness \eqref{LR} with respect to a past-future filtration $(\mathcal{F}_{s,t})$ on an interval $(T_0,T_1)\subset\RR$ with  mean
$$
\E(X_t)=\alpha +\beta t
$$
and with covariance
\begin{equation}\label{Cov[A]}
\Cov(X_s,X_t)=(as+b)(ct+d), \; s<t,
\end{equation}
such that $ad-bc>0$ and $(at+b)(ct+d)>0$ for $t\in(T_0,T_1)$.
 Suppose that \eqref{EQ:q-Var-chi} holds, and that
\begin{equation}\label{chi+}
\widetilde\chi:=\chi+\alpha\eta+\theta\beta+\sigma \alpha^2+\tau\beta^2+\rho\alpha\beta>0.
\end{equation}
Let $\psi(t)=(dt-b)/(a-ct)$. Then
stochastic process
 \begin{equation}\label{EQ:chi2QH}
 Y_t=\frac{a-ct}{ad-bc} \left(X_{\psi(t)}-\alpha-\beta\psi(t)\right)
\end{equation}
  is a quadratic harness in $QH(\eta',\theta';\sigma',\tau';\rho')$ on the interval $\left(\frac{a T_0+b}{cT_0+d},\frac{aT_1+b}{cT_1+d}\right)\subset(0,\infty)$, and has
 parameters
\begin{eqnarray}
\eta'&=&({d (\eta +\beta  \rho +2 \alpha
   \sigma )+c (\theta +\alpha  \rho +2 \beta
   \tau )})/\widetilde{\chi}\,,
   \label{eta'}
   \\
\theta'&=& %
({b (\eta +\beta  \rho +2 \alpha
   \sigma )+a (\theta +\alpha  \rho +2 \beta
   \tau )})/\widetilde{\chi}\,,
   \\
\sigma'&=&%
({\tau  c^2+d \rho  c+d^2 \sigma
   })/\widetilde{\chi}\,,
   \\
\tau'&=& %
({\tau  a^2+b \rho  a+b^2 \sigma
   })/\widetilde{\chi}\,,
   \\
\rho'&=&(b c \rho +a d \rho +2 b d \sigma +2
   a c \tau )/ {\widetilde{\chi}} \,.
    \label{gamma'}
\end{eqnarray}
\end{theorem}
 We  remark that the \ftransf\ used in Theorem \ref{P-chi2QH} is reversible,
 so $\X=\Y^f$ with
$$A=\macierz{a}{b}{c}{d}, \; \un{m}=\wekt{\beta}{\alpha},$$
that is
 $$X_t=(c t+d)Y_{(at+b)/(ct+d)}+\alpha+\beta t.$$

The proof  of Theorem \ref{P-chi2QH} uses matrix notation and is postponed until Section  \ref{Sec:proof_P-chi2chi}.
Here we   use Theorem \ref{P-chi2QH} to give examples which show that Conjecture \ref{Conj} does not hold on finite intervals.
\begin{example}\label{Ex:W+tX} Let  $(W_t)_{t> 0}$ be the Wiener process and $\xi$ be a centered random variable independent of $W$ with $\E\xi^2=v^2$.
Let
$$
X_t=W_t+\xi  t, \; t>0.
$$
Then $E(X_t)=0$ and $\Cov(X_s,X_t)=s(1+v^2t)$.
Furthermore, $X_t$ is a harness with respect to its natural past-future filtration, and
$$\Var(X_t|\mathcal{F}_{s,u})=\Var(W_t|W_s,W_u)=F_{t,s,u}.$$
 So from Theorem \ref{P-chi2QH} (or by direct calculation) we see that
$$ %
Y_t=(1-tv^2)X_{t/(1-tv^2)}
$$
is a standard quadratic harness on $(0, 1/v^2)$   and has parameters $\eta=\theta=\sigma=\tau=0$, $\gamma=1$.
\end{example}

Next, we give a simple example of a quadratic harnesses with $\gamma>1$ and $\sigma\tau>1$; such examples are interesting because most of the general theory developed in \cite{Bryc-Matysiak-Wesolowski-04} does not apply.

  \begin{example}\label{Ex:PB}
 Suppose $(G_t)_{t>0}$ is a gamma process with both  parameters in \eqref{G-d} equal $1$.  Let $\xi$ be an independent random variable with mean $\E(\xi)=\beta>0$ and $\E\xi^2=v^2$.  Let
$$
X_t=\xi G_t\,.
$$
Then  $\E(X_t)=\beta t$. From
$$\Cov(X_s,X_t)=\E(\Cov(X_s,X_t|\xi))+\Cov(\E(X_s|\xi),\E(X_t|\xi)) $$
 we see that for $s\leq t$, $\Cov(X_s,X_t)=s (v^2 t+\beta )$.
Let $(\mathcal{F}_{s,u})$ be the natural past-future filtration associated with $(X_t)$. Consider auxiliary $\sigma$-fields
 $\widetilde{\mathcal{F}}_{s,u}$ generated by $\xi$ and $\{G_t: t\in(0,s]\cup [u,\infty)\}$.
 Then
 $\E(X_t|\widetilde{\mathcal{F}}_{s,u})=\xi \left(\frac{u-t}{u-s}G_s+\frac{t-s}{u-s}G_u\right)=
 \frac{u-t}{u-s}X_s+\frac{t-s}{u-s}X_u$ so
 $\E(X_t|{\mathcal{F}}_{s,u})=
 \frac{u-t}{u-s}X_s+\frac{t-s}{u-s}X_u$.
Similarly, using \eqref{V-gammaG} with $\alpha=1$ we get
\begin{multline*}
\Var(X_t|\widetilde{\mathcal{F}}_{s,u})=\xi^2 \Var(G_t|G_s,G_u)=\xi^2\frac{(u-t)(t-s)}{(u-s+1)(u-s)^2}(G_u-G_s)^2
\\=\frac{(u-t)(t-s)}{(u-s+1)(u-s)^2}(X_u-X_s)^2,
\end{multline*}
so
$$
\Var(X_t|{\mathcal{F}}_{s,u})=\frac{(u-t)(t-s)}{(u-s+1)(u-s)^2}(X_u-X_s)^2.$$
From Theorem \ref{P-chi2QH}  applied with
$$
a=v,\; b=0,\;c=v,\;d=\beta/v\,,
$$ we see that
 \begin{equation*}\label{Z_-}
 Z_t
=v(1-t)X_{\frac{\beta t}{v^2(1-t)}}-\frac{\beta^2}{v} t
\end{equation*}
is a standard quadratic harness on $(0,1)$ with parameters
$$
\eta= \theta=2v/\beta,\; \sigma=\tau=v^2/\beta^2,\;\gamma=1+2\sqrt{\sigma\tau}.
$$
In particular, $\gamma=1+2\sqrt{\sigma\tau}$ and $\sigma\tau=v^4/\beta^4=(\E(\xi^2))^2/(\E(\xi))^4\geq 1$ can be arbitrarily large.

Of course, the distribution of $\xi$ is arbitrary so the higher moments of $Z_t$ are not determined uniquely and may fail to exist. In particular, Conjecture \ref{Conj} does not hold for quadratic harnesses on finite intervals.

\end{example}

\subsection{Matrix notation}\label{Sec:MN}
For calculations, it will be convenient  to parameterize time as a subset of the projective plane, i.e. using  $\un{t}=\wektT{t}{1}$.
 Throughout this section,  letters $s,t,u\in \TT$ are reserved to denote time, and $\un{s}$, $\un{t}$, and also $\un{u}=[u,1]^T$ have this special meaning also when used with subscripts or primed. We also use the convention that  $s\leq t\leq u$.

We rewrite \eqref{LR}
 in  vector  form as
\be\label{LR-Delta}
\E(X_t|\F_{s,u})=\langle \un{t},\un{\Delta}_{s,u}(\X)\rangle \;, \ee where
the components of  $\un{\Delta}_{s,u}(\X)$ are defined by \eqref{DD}.

It follows from \eqref{LR} that admissible  expectations of a harness $\X$ are affine in $t$,  i.e.,
\be\label{mean} \E(X_t)=\langle \un{t},\un{\mu}\rangle\;,\quad t\in \TT, \ee
where $\un{\mu}=[\mu_1,\mu_2]^T$.
Moreover, if $\X$ is a square integrable harness then by \cite[Proposition 2.1]{Bryc-Matysiak-Wesolowski-04} the admissible covariances
are of the form \be\label{cov} \Cov(X_s,X_t)=\langle \un{s},\:\Sigma
\un{t}\rangle \;,\quad s,t\in \TT,\;\; %
s\leq t\;, \ee where
$$\Sigma=\left[\begin{matrix}{c_0} & c_1\\
c_2&{c_3}
\end{matrix}
\right].$$

Note that under our convention $s\leq t$ so $\Sigma$ is not a symmetric matrix; for example,  covariance  $\min\{s,t\}$ is represented by matrix  $\Sigma=\left[\begin{matrix} 0& 1\\
0&0
\end{matrix}
\right]$.
We also remark that if ${c_3}\geq 0$, $c_1> c_2$, and ${c_0}{c_3}> c_2^2$ then the right hand side of \eqref{cov}  indeed defines a positive definite function on $\TT=(0,\infty)$, and that  processes with  $c_1=c_2$ are degenerate in the sense that $X_t$ is a linear combination of $X_s,X_u$.

 Formula \eqref{EQ:q-Var} can be written  in matrix form as
 \be\label{quwa1} {\Var}(X_t|\F_{s,u})=F_{t,s,u}\left(1+\langle \un{\theta},\un{\Delta}_{s,u}\rangle
+\langle \un{\Delta}_{s,u},\Gamma\un{\Delta}_{s,u}\rangle \right)\;, \ee where
\begin{equation}
  \label{Gamma}
  \un{\theta}=\wekt{\theta}{\eta},\;
\Gamma=\left[\begin{matrix} \tau&-1\\ \gamma &\sigma
\end{matrix}
\right]\;.
\end{equation}
Here $\eta,\theta,\sigma,\tau,\gamma$  are constants independent of $s, t, u$.

\begin{remark}\label{Rem-Gamma}
Of course, any matrix
$$
\Gamma=\macierz{\tau}{\Gamma_{12}}{\Gamma_{21}}{\sigma}
$$
with $\Gamma_{12}+\Gamma_{21}=\gamma-1$ gives the same right hand side of \eqref{quwa1}.
 The standard choice of symmetric $\Gamma$ is in fact inconvenient, see Proposition \ref{P-normalization}.
The choice made in \eqref{Gamma}  matches the notation we used in previous papers:
after substituting $q$ for $\gamma$, the resulting parametrization of the conditional variance is identical to   \cite[(2.14)]{Bryc-Matysiak-Wesolowski-04}.
\end{remark}

The non-random constant $F_{t,s,u}$   is determined uniquely by taking the average of both sides of \eqref{quwa1}. According to \cite[(2.15)]{Bryc-Matysiak-Wesolowski-04}, with the choice of $\Gamma$ as in \eqref{Gamma}, formula \eqref{Ftsu}  holds.
\subsection{Transformations of quadratic harnesses}\label{S:TST}

For a non-degenerate affine function $f$, as defined in \eqref{f} and \eqref{Af} with M\"obius transform  $\varphi :\ST\to\TT$,  and for $s<u$ in $\ST$, the transformed $\sigma$-fields are
\begin{equation}\label{F^f}
\calF^f_{s,u}=\begin{cases}
\calF_{\varphi (s),\varphi (u)} & \mbox{ if } \det(A)>0\,,\\
\calF_{\varphi (u),\varphi (s)} & \mbox{ if } \det(A)<0\,.
\end{cases}
\end{equation}
 It is clear that if $\X$ has    linear regressions and quadratic  conditional variances with respect to past-future filtration $(\mathcal{F}_{s,u})$, then $\X^f$ has    linear regressions and  quadratic  conditional variances with respect to the past-future filtration $(\calF^f_{s,u})$.

 The following technical result describes how the parameters of a quadratic harness change
  under the %
  \ftransf .

 \begin{proposition}\label{P-chi2chi}
 Let $\X$ be a harness \eqref{LR} with respect to a past-future filtration $(\mathcal{F}_{s,u})$ on an open interval $\TT$ with  the first two moments given by \eqref{mean} and \eqref{cov}.
 Suppose that
 \be\label{quwa-chi} {\Var}(X_t|\F_{s,u})=F_{t,s,u}\left(\chi+\langle \un{\theta},\un{\Delta}_{s,u}\rangle
+\langle \un{\Delta}_{s,u},\Gamma\un{\Delta}_{s,u}\rangle \right)\;, \ee with non-random $F_{t,s,u}$,
$\chi\in\RR$, $\un{\theta}\in\RR^2$, and arbitrary $2\times 2$ matrix $\Gamma$. Let $f$ be a non-degenerate affine function \eqref{f} such that $\varphi$ is onto  $\TT$.

Then the process  $\widetilde \X:=\X^f$ on $\ST=\varphi^{-1}(\TT)$, see \eqref{xf},
 satisfies  \eqref{mean}, \eqref{cov} with  $\un\mu$ and $\Sigma$ replaced by
\be\label{muf}
\widetilde{\un{\mu}}=A^T\un{\mu}+\un{m}. \ee
\begin{equation}\label{cov transf}
\widetilde \Sigma=\begin{cases}
A^T\Sigma A & \mbox{ if } \det(A)>0\,, \\
A^T\Sigma^T A & \mbox{ if } \det(A)<0\,.
\end{cases}
\end{equation}
With respect to past-future filtration $(\calF^f_{s,u})$, formulas \eqref{LR} and \eqref{quwa-chi} hold for $\widetilde \X$ with
 \begin{equation}
  \label{Gamma A}
  \widetilde\Gamma  =\begin{cases}
A^{-1}\Gamma  (A^{-1})^T & \mbox{ if } \det(A)>0\,, \\
(A^{-1})^T\Gamma A^{-1} & \mbox{ if } \det(A)<0\,,
\end{cases}
\end{equation}
\begin{equation}\label{theta Y}
\widetilde {\un{\theta} }=A^{-1}\un{\theta}  -(\widetilde\Gamma+\widetilde\Gamma^T)\un{m}\,,
  \end{equation}
  and
  \begin{equation}\label{chi2chi}
\widetilde \chi=\chi-\langle  \widetilde{\un{\theta}},\un{m}\rangle - \langle \un{m},\widetilde\Gamma\un{m}\rangle\,.
\end{equation}
\end{proposition}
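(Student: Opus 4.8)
The plan is to reduce the entire statement to two elementary ingredients. The first is the matrix identity $(ct+d)\,\un{\varphi(t)}=A\un{t}$, valid whenever $ct+d\neq0$; it is nothing but the definition of $\varphi$ read off the bottom entry of $A\un{t}$. The second is that a harness satisfies $X_r=\langle\un{r},\un{\Delta}_{p,q}(\X)\rangle$ for $r\in\{p,q\}$ (and conditionally for $p<r<q$), so that the quantities appearing in \eqref{mean}, \eqref{cov}, \eqref{LR} and \eqref{quwa-chi} are all ``linear in the time variable'' and transform by pull-back under $A$. Everything past that point is substitution.

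I would first dispose of \eqref{muf} and \eqref{cov transf}. Taking expectations in $\widetilde X_t=(ct+d)X_{\varphi(t)}+\langle\un{t},\un{m}\rangle$ and using $\E(X_{\varphi(t)})=\langle\un{\varphi(t)},\un{\mu}\rangle$ together with the identity above gives $\E(\widetilde X_t)=\langle A\un{t},\un{\mu}\rangle+\langle\un{t},\un{m}\rangle=\langle\un{t},A^T\un{\mu}+\un{m}\rangle$. For the covariance, $\Cov(\widetilde X_s,\widetilde X_t)=(cs+d)(ct+d)\Cov(X_{\varphi(s)},X_{\varphi(t)})$; inserting \eqref{cov} and the same identity yields $\langle\un{s},A^T\Sigma A\un{t}\rangle$ when $\det A>0$, since then $\varphi(s)<\varphi(t)$ so the convention $s\le t$ built into \eqref{cov} is respected, and the same expression with $\Sigma$ replaced by $\Sigma^T$ when $\det A<0$, since then $\varphi(s)>\varphi(t)$ and one must interchange the two arguments of \eqref{cov}, which transposes $\Sigma$.

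The heart of the proof is a single identity. Since $\X$ obeys \eqref{LR-Delta}, for $s<t<u$ we have $\E(\widetilde X_t\,|\,\calF^f_{s,u})=(ct+d)\,\E(X_{\varphi(t)}\,|\,\calF^f_{s,u})+\langle\un{t},\un{m}\rangle$; by \eqref{F^f} the conditioning field is $\F_{p,q}$ with $\{p,q\}=\{\varphi(s),\varphi(u)\}$, so $\E(X_{\varphi(t)}\,|\,\F_{p,q})=\langle\un{\varphi(t)},\un{\Delta}_{p,q}(\X)\rangle$ and hence $\E(\widetilde X_t\,|\,\calF^f_{s,u})=\langle\un{t},\,A^T\un{\Delta}_{p,q}(\X)+\un{m}\rangle$. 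A short direct computation from the definitions of $\Delta_{s,u}$ and $\widetilde\Delta_{s,u}$, substituting $X_{\varphi(s)}=\langle\un{\varphi(s)},\un{\Delta}_{p,q}(\X)\rangle=\frac{1}{cs+d}\langle\un{s},A^T\un{\Delta}_{p,q}(\X)\rangle$ and likewise for $X_{\varphi(u)}$, identifies $\un{\Delta}_{s,u}(\widetilde\X)=A^T\un{\Delta}_{p,q}(\X)+\un{m}$; combined with the display above this establishes \eqref{LR} for $\widetilde\X$ and supplies the inverse relation $\un{\Delta}_{p,q}(\X)=(A^{-1})^T(\un{\Delta}_{s,u}(\widetilde\X)-\un{m})$. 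For the conditional variance, $\Var(\widetilde X_t\,|\,\calF^f_{s,u})=(ct+d)^2\Var(X_{\varphi(t)}\,|\,\F_{p,q})$ and \eqref{quwa-chi} for $\X$ makes the bracketed polynomial $\chi+\langle\un{\theta},\un{\Delta}_{p,q}(\X)\rangle+\langle\un{\Delta}_{p,q}(\X),\Gamma\un{\Delta}_{p,q}(\X)\rangle$. Substituting $\un{\Delta}_{p,q}(\X)=(A^{-1})^T(\un{v}-\un{m})$ with $\un{v}:=\un{\Delta}_{s,u}(\widetilde\X)$, using $\langle\un{\theta},(A^{-1})^T\un{x}\rangle=\langle A^{-1}\un{\theta},\un{x}\rangle$ and $\langle(A^{-1})^T\un{x},M(A^{-1})^T\un{x}\rangle=\langle\un{x},A^{-1}M(A^{-1})^T\un{x}\rangle$, expanding the shift by $\un{m}$, and collecting the parts of degree $2$, $1$ and $0$ in $\un{v}$, one reads off $\widetilde\Gamma=A^{-1}\Gamma(A^{-1})^T$ and $\widetilde{\un{\theta}}=A^{-1}\un{\theta}-(\widetilde\Gamma+\widetilde\Gamma^T)\un{m}$, i.e. \eqref{Gamma A} and \eqref{theta Y}, while the degree-$0$ term matches \eqref{chi2chi} once $\widetilde{\un{\theta}}$ is inserted and $\langle(\widetilde\Gamma+\widetilde\Gamma^T)\un{m},\un{m}\rangle=2\langle\un{m},\widetilde\Gamma\un{m}\rangle$ is used; the prefactor $(ct+d)^2F_{\varphi(t),p,q}$ is non-random and serves as $\widetilde F_{t,s,u}$. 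When $\det A<0$ the same computation applies, the order-reversal of $\varphi$ (already recorded in \eqref{F^f}) interchanging the two arguments wherever the asymmetric matrices $\Sigma$ and $\Gamma$ appear, which is reflected in the transposes in \eqref{cov transf} and \eqref{Gamma A}.

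The mathematical content is light; the difficulty is bookkeeping. The two spots I expect to need the most care are tracking the shift $\un{m}$ as it propagates through the quadratic form --- this is exactly the mechanism by which $\chi$ turns into $\widetilde\chi$ and $\widetilde{\un{\theta}}$ acquires the $-(\widetilde\Gamma+\widetilde\Gamma^T)\un{m}$ correction --- and the $\det A<0$ branch, where one must keep the swapped order of the arguments of $\Sigma$ and $\Gamma$ consistent throughout, bearing in mind that $\widetilde\Gamma$ is pinned down only up to its antisymmetric part, so that it is the symmetric part of the matrix in \eqref{Gamma A} that ultimately has to come out right.
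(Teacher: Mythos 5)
Your proposal is correct and follows essentially the same route as the paper's proof: the identity $(ct+d)\,\un{\varphi(t)}=A\un{t}$, the transformation law $\un{\Delta}_{s,u}(\X^f)=A^T\un{\Delta}_{\varphi(s),\varphi(u)}(\X)+\un{m}$ (the paper's Lemma \ref{L: Delta_f}, which you obtain from the interpolation identity $X_p=\langle\un{p},\un{\Delta}_{p,q}\rangle$ rather than from the matrix formula \eqref{D matrix} --- an immaterial difference), followed by substitution of the inverse relation into the linear and quadratic forms and collection of terms by degree. The one point worth recording is that your substitution, exactly like the paper's own computation, produces $A^{-1}\Gamma(A^{-1})^T$ for the quadratic part in \emph{both} sign cases (since $\un{\Delta}$ is symmetric in its two time arguments), so the transposed expression in \eqref{Gamma A} for $\det(A)<0$ must be understood as a choice of representative within the antisymmetric ambiguity of $\widetilde\Gamma$, as you correctly flag at the end.
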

We remark that
transformation \eqref{cov transf} preserves a product form of the covariance.  That is, suppose that
\begin{equation}\label{cov-prod}
\Cov(X_s,X_t)=(\eps s+\delta)(\phi t+\psi), \; s<t,
\end{equation}
so that
$$\Sigma=
\macierz{ \eps  \phi  }{ \eps  \psi }{
 \delta  \phi  }{ \delta  \psi}.$$ If for book-keeping we write the coefficients of  \eqref{cov-prod} as
 \begin{equation}
   \label{bookkeeping}
   \Theta=\macierz{\eps}{\delta}{\phi}{\psi},
 \end{equation}
 then a calculation based on \eqref{cov transf} shows that for  $\det A>0$ the covariance of $\widetilde\X$ corresponds to
$\widetilde\Theta=\Theta A$.

We postpone the proof of Proposition \ref{P-chi2chi} until
 Section \ref{Sec:proof_P-chi2chi}, so that we can first clarify the role of non-random constant $F_{t,s,u}$.
 The main point is that in the non-degenerate case with $c_1>c_2$,  this constant is determined uniquely by taking the average of both sides of \eqref{quwa-chi}.
  Furthermore, we  explain when $F_{t,s,u}$ is  given by formula  \eqref{Ftsu}.

\begin{proposition}\label{P-normalization}
Suppose a harness $\X$ has
 mean \eqref{mean} and
non-degenerate covariance \eqref{cov} with $c_1>c_2$. If $\X$ has quadratic conditional variance \eqref{quwa-chi} and the off-diagonal entries of matrix $\Gamma$, see Remark \ref{Rem-Gamma}, are chosen so that
 \begin{equation}\label{consistency}
\chi+\langle\un{\theta},\un{\mu}\rangle+\langle\un{\mu},\Gamma\un{\mu}\rangle+{\rm tr}(\Gamma \Sigma^T)=0,
\end{equation}
 then %
 $u(1+s\sigma)+\tau-s\gamma\ne 0$
 and $F_{t,s,u}$ is given by formula \eqref{Ftsu}. Moreover, transformation formulas in Proposition \ref{P-chi2chi} preserve  \eqref{Ftsu}.
\end{proposition}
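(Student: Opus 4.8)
The plan is to recover $F_{t,s,u}$ by averaging \eqref{quwa-chi} and comparing the result with $\E[\Var(X_t\mid\F_{s,u})]$ computed from the law of total variance. The first step is bookkeeping around $\un{\Delta}_{s,u}$: by \eqref{LR-Delta} one has $\E(X_t\mid\F_{s,u})=\langle\un{t},\un{\Delta}_{s,u}\rangle$, and the choices $t=s$ and $t=u$ give $X_s=\langle\un{s},\un{\Delta}_{s,u}\rangle$, $X_u=\langle\un{u},\un{\Delta}_{s,u}\rangle$, i.e. $\un{\Delta}_{s,u}=D^{-1}(X_s,X_u)^T$ with $D=\macierz{s}{1}{u}{1}$, $\det D=s-u$. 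Hence $\E\un{\Delta}_{s,u}=\un{\mu}$ and $\Cov(\un{\Delta}_{s,u})=D^{-1}\Cov\big((X_s,X_u)^T\big)(D^{-1})^T$; inserting \eqref{cov}, writing $\Sigma=\Sigma_{\rm sym}+\tfrac{c_1-c_2}{2}J$, and simplifying with the identities \eqref{J-props} for $A=D$, I expect
\[\Cov(\un{\Delta}_{s,u})=\Sigma_{\rm sym}+\frac{c_1-c_2}{2(u-s)}\,J\big(\un{u}\,\un{s}^T+\un{s}\,\un{u}^T\big)J^T .\]

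Then I would compute $\E[\Var(X_t\mid\F_{s,u})]$ in two ways. From total variance, writing $\un{t}=\tfrac{u-t}{u-s}\un{s}+\tfrac{t-s}{u-s}\un{u}$ so that the symmetric contributions cancel,
\[\E[\Var(X_t\mid\F_{s,u})]=\langle\un{t},\Sigma\un{t}\rangle-\langle\un{t},\Cov(\un{\Delta}_{s,u})\un{t}\rangle=(c_1-c_2)\,\frac{(u-t)(t-s)}{u-s}.\]
From \eqref{quwa-chi}, using $\langle\un{a},\Gamma\un{a}\rangle=\tr(\Gamma\un{a}\un{a}^T)$ and $\E(\un{\Delta}_{s,u}\un{\Delta}_{s,u}^T)=\Cov(\un{\Delta}_{s,u})+\un{\mu}\un{\mu}^T$, then substituting the covariance formula and using cyclicity of the trace together with \eqref{J-props},
\[\E[\Var(X_t\mid\F_{s,u})]=F_{t,s,u}\Big(\big[\chi+\langle\un{\theta},\un{\mu}\rangle+\langle\un{\mu},\Gamma\un{\mu}\rangle+\tr(\Gamma\Sigma^T)\big]+\frac{c_1-c_2}{u-s}\langle\un{s},J^T\Gamma J\un{u}\rangle\Big).\]
The bracket is zero by \eqref{consistency} (this is exactly the point of that condition; the off-diagonal split making it hold is unique because $c_1\ne c_2$). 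Equating the two displays and cancelling the factor $c_1-c_2\ne0$ gives $(u-t)(t-s)=F_{t,s,u}\langle\un{s},J^T\Gamma J\un{u}\rangle$; since the left side is nonzero this forces $\langle\un{s},J^T\Gamma J\un{u}\rangle\ne0$ and $F_{t,s,u}=(u-t)(t-s)/\langle\un{s},J^T\Gamma J\un{u}\rangle$, which is \eqref{Ftsu-vec} because $\langle\un{t},J\un{u}\rangle\langle\un{s},J\un{t}\rangle=(t-u)(s-t)=(u-t)(t-s)$. A direct evaluation of $\langle\un{s},J^T\Gamma J\un{u}\rangle$ with $\Gamma=\macierz{\tau}{-1}{\gamma}{\sigma}$ gives $u(1+s\sigma)+\tau-s\gamma$, so \eqref{Ftsu} follows as well.

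For the last assertion I would apply what was just proved to $\widetilde\X=\X^f$. By Proposition \ref{P-chi2chi}, $\widetilde\X$ is a harness with mean \eqref{mean} and covariance \eqref{cov} satisfying \eqref{quwa-chi} with data $\widetilde\chi,\widetilde{\un\theta},\widetilde\Gamma$, and by \eqref{cov transf} and \eqref{J-props}, $\widetilde\Sigma-\widetilde\Sigma^T=(c_1-c_2)|\det A|\,J$, so $\widetilde\X$ stays nondegenerate with $\widetilde c_1>\widetilde c_2$. It then remains to check that $\widetilde\chi,\widetilde{\un\theta},\widetilde\Gamma,\widetilde{\un\mu},\widetilde\Sigma$ again satisfy \eqref{consistency}. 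This I would verify by substituting \eqref{muf}, \eqref{cov transf}, \eqref{Gamma A}, \eqref{theta Y}, \eqref{chi2chi} into the left side of the transformed \eqref{consistency}: the $\un{m}$-terms cancel in pairs, the identity $\langle\widetilde{\un\theta},A^T\un\mu\rangle=\langle\un\theta,\un\mu\rangle-\langle\un{m},\widetilde\Gamma A^T\un\mu\rangle-\langle A^T\un\mu,\widetilde\Gamma\un{m}\rangle$ disposes of the $\un\theta$-term, and $A^{-1}A=I$ together with cyclicity of the trace reduce everything to $\chi+\langle\un\theta,\un\mu\rangle+\langle\un\mu,\Gamma\un\mu\rangle+\tr(\Gamma\Sigma^T)=0$; the case $\det A<0$ is the same after replacing $\Sigma$ by $\Sigma^T$ and using the corresponding branch of \eqref{Gamma A}. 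The first part then applies to $\widetilde\X$ and gives \eqref{Ftsu-vec} with $\widetilde\Gamma$ in place of $\Gamma$. The main obstacle is purely computational: getting $\Cov(\un{\Delta}_{s,u})$ right and, more delicately, running the substitution of the transformation formulas in the last step without sign errors — here \eqref{J-props} and cyclicity of the trace are the tools that keep the algebra in check.
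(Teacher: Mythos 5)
Your proposal is correct and follows essentially the same route as the paper: compute $\E[\Var(X_t|\calF_{s,u})]$ once by total variance and once by averaging the quadratic form in $\un{\Delta}_{s,u}$ (via its covariance matrix, which you derive through $D^{-1}$ and the split $\Sigma=\Sigma_{\rm sym}+\tfrac{c_1-c_2}{2}J$ rather than the paper's direct matrix identity, but the result agrees with \eqref{covv}), then observe that \eqref{consistency} kills the constant term and forces $\langle\un{s},J^T\Gamma J\un{u}\rangle\neq 0$ together with \eqref{Ftsu-vec}. The preservation step — checking $\widetilde c_1>\widetilde c_2$ and that the transformed data again satisfy \eqref{consistency} by substituting the formulas of Proposition \ref{P-chi2chi} — is exactly what the paper does (it states the two invariance identities and defers the computation), so no gap.
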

 Formulas \eqref{EQ:q-Var} and \eqref{Gamma} illustrate the choice of such $\Gamma$.

\section{Proofs}\label{sect:proofs}

\subsection{Proof of Proposition \ref {PEB}}\label{ProofPEB}
Let $N_t=\{y: p(t,y; v,z_v)>0\}\subset M_t$ and denote by $A$ a generic Borel set.
Let
\begin{equation}\label{f1}
f_{s,t}(x,y)=\frac{p(s,x; t,y)p(t,y; v,z_v)}{p(s,x;v,z_v)} 1_{N_s}(x) .
\end{equation}
For any $s<t$ in $[r,v)$ and $x\in N_s$, define probability measure $\nu_{s,x,t}$ by
\begin{equation}
\nu_{s,x,t}(A)%
=\int_A  f_{s,t}(x,y)\pi_t(dy).
\end{equation}
For any $t\in(r,v)$, we let $\nu_t=\nu_{r,z_r,t}$.

To prove that the above probabilities define a Markov process we  verify Chapman-Kolmogorov equations.
We need to show that  for $x\in N_s$,
\begin{equation}\label{CK-EQ}
\int_A f_{s,u}(x,z) \pi_u(dz)= \int_{N_t}  f_{s,t}(x,y) \left(\int_A  f_{t,u}(y,z)\pi_u(dz)  \right)  \pi_t(dy).
\end{equation}
To this end, we use algebraic identity that holds for all $(x,y)\in N_s\times N_t$,
\begin{equation}\label{ff1}
 f_{s,t}(x,y) f_{t,u}(y,z)=\frac{p(s,x; t,y)p(t,y; u,z)p(u,z; v,z_v)}{p(s,x; v,z_v)} .
\end{equation}
 By \eqref{ff1} and Chapman-Kolmogorov equations for process $(Z_t)$, the right hand side of \eqref{CK-EQ} is
\begin{multline}
 \int_{N_t} \left(\int_A  \frac{p(s,x; t,y)p(t,y; u,z)p(u,z; v,z_v)}{p(s,x;v,z_v)} \pi_u(dz)  \right)  \pi_t(dy)
=\int_A f_{s,u}(x,z) \pi_u(dz)
\\-   \int_{N_t^c} \left(\int_A  \frac{p(s,x; t,y)p(t,y; u,z)p(u,z; v,z_v)}{p(s,x;v,z_v)} \pi_u(dz)  \right)  \pi_t(dy).
\end{multline}
So  to end the proof of \eqref{CK-EQ}, it is enough to show that for $y\in N_t^c$, $A\subset N_u\subset M_u$,

\begin{equation}\label{NC}
 \int_A p(t,y; u,z)p(u,z; v,z_v)    \pi_u(dz)=0.
\end{equation}
To see this, note that for  $y\in N_t^c$ we have $p(t,y;v, z_v)=0 $. Therefore,
\begin{multline}
0\leq \int_A p(t,y; u,z)p(u,z; v,z_v)    \pi_u(dz)\\
\leq  \int_{M_u} p(t,y; u,z)p(u,z; v, z_v)    \pi_u(dz)=p(t,y;v, z_v)=0 .
\end{multline}
The same argument with $(s,x)=(r,z_r)$ shows that Chapman-Kolmogorov equations hold for $\nu_t$. %

 Let $(X_t)_{t\in(r,v)}$ be a Markov process with univariate laws $(\nu_t)$ and transition probabilities $(\nu_{s,x,t})$.
 We now verify that $(X_t)$ is a bridge.  Since $\nu_t=\nu_{r,z_r,t}$, assumption \eqref{Brzeg1} implies that $X_t\toP z_r$ as $t\to r^+$, and similarly \eqref{Brzeg2} implies that  $X_t\toP z_v$ as $t\to v^-$.

 Due to Markov property, it suffices to  verify the implication \eqref{BB-cond-lawsZ}$\Rightarrow$\eqref{BB-cond-lawsX} for $s<t<u$. Fix $g\geq 0$. Assumption  \eqref{BB-cond-lawsZ} implies that for any  measurable function $\psi:M_s\times M_u\to [0,\infty)$,
 \begin{multline}\label{CondZ}
 \int_{M_s\times M_t\times M_u}g(y)\psi(x,z) p(s,x;t,y)p(t,y;u,z)\pi_s(dx)\pi_t(dy)\pi_u(dz)
 \\
 = \int_{M_s\times M_u}h(x,z) \psi(x,z)p(s,x;u,z)\pi_s(dx)\pi_u(dz).
\end{multline}
To prove \eqref{BB-cond-lawsX}, it is enough to show that for
  any  measurable $\varphi:N_s\times N_u\to [0,1]$, we have
  $$\E(g(X_t)\varphi(X_s,X_u))=\E(h(X_s,X_u)\varphi(X_s,X_u)),$$
  which is the same as
  \begin{multline}\label{CondY}
    \int_{N_s\times N_t\times N_u} g(y) \varphi(x,z) f_{s,t}(x,y)f_{t,u}(y,z)f_{r,s}(z_r,x)\pi_s(dx)\pi_t(dy)\pi_u(dz)
    \\
    =
       \int_{N_s\times N_u} h(x,z)\varphi(x,z) f_{s,u}(x,z)f_{r,s}(z_r,x)\pi_s(dx)\pi_u(dz)  .
\end{multline}
Since $p(r,z_r;v,z_v)>0$, from \eqref{f1} and \eqref{ff1}  we see that \eqref{CondY} is equivalent to
\begin{multline*}
    \int_{N_s\times N_t\times N_u} g(y) \varphi(x,z)p(r,z_r;s,x)p(u,z;v,z_v) \\
    \times p(s,x;t,y)p(t,y;u,z)\pi_s(dx)\pi_t(dy)\pi_u(dz)
    \\
    =
       \int_{N_s\times N_u} h(x,z) \varphi(x,z) p(r,z_r;s,x)p(u,z;v,z_v) p(s,x;u,z)\pi_s(dx)\pi_u(dz).
\end{multline*}
Using \eqref{NC}, we can enlarge the region of integration on the left hand side to
$N_s\times M_t\times N_u$. Thus, the identity follows from \eqref{CondZ} applied to
$$
\psi(x,z)= \varphi(x,z)p(r,z_r;s,x)p(u,z;v,z_v) 1_{N_s}(x)1_{N_u}(z).
$$

\qed
\subsection{Proof of Propositions \ref{P-chi2chi} and \ref{P-normalization} }\label{Sec:proof_P-chi2chi}

We re-write   formula \eqref{Ftsu}  in matrix notation using a special matrix
 \begin{equation}
   \label{J} J=\left[\begin{matrix}0& 1\\
-1&0
\end{matrix}
\right].
 \end{equation}
 It is easy to see that $J^2=-I$, $J^T=-J$.  For ease of reference we state also
two less obvious properties:  for $A\in GL_2(\RR)$,
\begin{equation}\label{J-props}
A^T J A=\det(A)J\; \mbox{ and }  J^TAJ=\det(A) (A^{-1})^T.
\end{equation}

Formula \eqref{Ftsu} can now be written as
\begin{equation}\label{Ftsu-vec}
F_{t,s,u}=\frac{\langle \un{t}, J\un{u}\rangle \langle \un{s}, J\un{t}\rangle}{\langle \un{s},J^T\Gamma J \un{u}\rangle}\,.
\end{equation}
This formula makes sense for any $2\times 2$ matrix $\Gamma$ as long as the denominator is non-zero.

\begin{lemma}  \label{L: Delta_f}
Let $f$ be a non-degenerate affine function \eqref{f} with M\"obius transform $\varphi $. If $s'=\varphi (s)$, $u'=\varphi (u)$, then
  \begin{equation}
    \label{Delta_f}
    \un{\Delta}_{s,u}(\X^f)=A^T \un{\Delta}_{s',u'}(\X)+\un{m}\;.
  \end{equation}
\end{lemma}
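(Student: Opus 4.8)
The plan is to exploit a characterization of $\un{\Delta}_{s,u}$ valid for any (random or deterministic) real function $\X$: the affine function $t\mapsto\langle\un{t},\un{\Delta}_{s,u}(\X)\rangle$ is the \emph{unique} affine function of $t$ whose values at $t=s$ and $t=u$ are $X_s$ and $X_u$, respectively. This is immediate from \eqref{DD}, since $\langle\un{s},\un{\Delta}_{s,u}(\X)\rangle=s\,\Delta_{s,u}(\X)+\widetilde{\Delta}_{s,u}(\X)=X_s$ and similarly $\langle\un{u},\un{\Delta}_{s,u}(\X)\rangle=X_u$, while an affine function of a real variable is determined by its values at two distinct points. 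Since both sides of \eqref{Delta_f} are vectors in $\RR^2$, it then suffices to check that the affine function $g(t):=\langle\un{t},\,A^T\un{\Delta}_{s',u'}(\X)+\un{m}\rangle$ agrees at $t=s$ and $t=u$ with $Y_t:=(\X^f)_t=(ct+d)X_{\varphi(t)}+\langle\un{t},\un{m}\rangle$ from \eqref{xf}; once that is established, equating the coefficient vectors of the two affine functions $g(t)$ and $\langle\un{t},\un{\Delta}_{s,u}(\X^f)\rangle$ gives the claim.

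The computation rests on the single identity $A\un{t}=(ct+d)\wekt{\varphi(t)}{1}$, which follows at once from $A\wekt{t}{1}=\wekt{at+b}{ct+d}$ and $\varphi(t)=(at+b)/(ct+d)$; this is where $ct+d\neq0$, i.e.\ $\varphi$ being defined at $t$, enters. Using $\langle\un{a},A^T\un{b}\rangle=\langle A\un{a},\un{b}\rangle$, one gets $g(t)=(ct+d)\,\langle\wekt{\varphi(t)}{1},\un{\Delta}_{s',u'}(\X)\rangle+\langle\un{t},\un{m}\rangle$. Evaluating at $t=s$ and using $\varphi(s)=s'$ together with the interpolation property $\langle\wekt{s'}{1},\un{\Delta}_{s',u'}(\X)\rangle=X_{s'}$ yields $g(s)=(cs+d)X_{s'}+\langle\un{s},\un{m}\rangle=Y_s$ by \eqref{xf}; the identical computation at $t=u$ gives $g(u)=(cu+d)X_{u'}+\langle\un{u},\un{m}\rangle=Y_u$. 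This completes the argument, and no step in it is really an obstacle.

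The only point that needs a little care is that $\Delta_{s,u}$ and $\widetilde{\Delta}_{s,u}$ — and hence the interpolation characterization — are symmetric under interchange of $s$ and $u$, so the argument applies uniformly whether $\det(A)>0$ or $\det(A)<0$, i.e.\ whether $\varphi$ is increasing or decreasing, and regardless of the order of $s'$ and $u'$. For completeness one could instead verify \eqref{Delta_f} by brute force: expand $\Delta_{s,u}(\X^f)$ and $\widetilde{\Delta}_{s,u}(\X^f)$ directly from \eqref{xf} and \eqref{DD}, substitute $s'=(as+b)/(cs+d)$, $u'=(au+b)/(cu+d)$, and simplify using $u'-s'=\det(A)(u-s)/\big((cu+d)(cs+d)\big)$, $a-cs'=\det(A)/(cs+d)$, $cu'-a=-\det(A)/(cu+d)$, $du'-b=\det(A)u/(cu+d)$, and $b-ds'=-\det(A)s/(cs+d)$; this reproduces the same identity but obscures the structural reason for the transpose $A^T$ and the additive shift $\un{m}$.
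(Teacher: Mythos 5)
Your proof is correct, and it takes a genuinely different route from the paper's. The paper proves \eqref{Delta_f} by first reducing to the linear part $g$ of $f$, then writing $\un{\Delta}_{s,u}$ in the matrix form \eqref{D matrix} involving $J$ and pushing $A$ through via the identities \eqref{hta} and \eqref{J-props} to obtain $\un{\Delta}_{s',u'}(\X)=(A^{-1})^{T}\un{\Delta}_{s,u}(\X^g)$. You instead characterize $\un{\Delta}_{s,u}(\X)$ as the coefficient vector of the unique affine interpolant of $(s,X_s)$ and $(u,X_u)$, and verify that $t\mapsto\langle\un{t},A^T\un{\Delta}_{s',u'}(\X)+\un{m}\rangle$ interpolates $(\X^f)_s$ and $(\X^f)_u$ via the single identity $A\un{t}=(ct+d)\wekt{\varphi(t)}{1}$. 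Both arguments are complete; the only points you rely on implicitly — that $s\neq u$ forces $s'\neq u'$ (injectivity of $\varphi$, from $\det A\neq 0$), and the symmetry of $\un{\Delta}$ under swapping endpoints so that the ordering of $s',u'$ is irrelevant — you do flag, and they hold. Your approach is more elementary and makes the appearance of $A^T$ and the additive shift $\un{m}$ structurally transparent; the paper's approach has the side benefit of establishing the formula \eqref{D matrix} and the $J$-calculus, which are reused later (e.g., in the covariance computation \eqref{covv} and in the proof of Proposition \ref{P-chi2chi}), so neither is wasted effort in context.
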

\begin{proof}
Let $g(x,y)=[x,y]A$ denote the linear part of $f$. Since  $\un{\Delta}_{s,u}(a)=\un{m}$ on a linear function  $a(t)=\langle \un{t},\un{m}\rangle$, and
 $\X^f(t)=\X^g(t)+\langle \un{t},\un{m}\rangle$, we have
 $\un{\Delta}_{s,u}(\X^f)= \un{\Delta}_{s,u}(\X^g) + \un{m}$.
Since $X_{s'}=\X^g(s)/(cs+d)$,  and from the
matrix form of \eqref{DD} we have
\begin{equation}
  \label{D matrix}
  \un{\Delta}_{s',u'}(\X)=\frac{J(X_{u'}\un{s}'-X_{s'}\un{u}')}{\langle \un{u}',J\un{s}'\rangle},
\end{equation}
we get
\begin{equation*}
\un{\Delta}_{s',u'}(\X)= J\frac{\frac{\X^g(u)}{cu+d}\un{s}'-\frac{\X^g(s)}{cs+d}\un{u}'}{\langle \un{u}',J\un{s}'\rangle}=
J\frac{\X^g(u)(cs+d)\un{s'}- {\X^g(s)}(cu+d)\un{u}'}{\langle(cu+d) \un{u}',J(cs+d)\un{s}'\rangle}\,.
\end{equation*}

Noting that
\begin{equation}\label{hta}
(cs+d)\un{s}'= A\un{s},
\end{equation}  and using \eqref{J-props} we get
\begin{multline*}
\un{\Delta}_{s',u'}(\X)=
JA\frac{\X^g(u)\un{s}- {\X^g(s)}\un{u}}{\langle \un{u},A^TJA\un{s}\rangle}=
(A^{-1})^{T}A^TJA\frac{\X^g(u)\un{s}- {\X^g(s)}\un{u}}{\langle \un{u},A^TJA\un{s}\rangle}
\\=(A^{-1})^{T}\un{\Delta}_{s,u}(\X^g).
\end{multline*}
Thus $\un{\Delta}_{s,u}(\X^f)= \un{\Delta}_{s,u}(\X^g) + \un{m}=A^T\un{\Delta}_{s',u'}(\X)+ \un{m}$.
\end{proof}
\begin{proof}[Proof of Proposition \ref{P-chi2chi}]
Throughout the proof we write $t'=\varphi (t)$ as in Lemma \ref{L: Delta_f}.
 If $\varphi$ is increasing, by \eqref{LR-Delta} and the
definition of $\widetilde \X=\X^f$ we have \be\label{first}
\E(\widetilde X_t|\F^f_{s,u})=(ct+d)\E(X_{t'}|\F_{s',u'})+\langle \un{t},\un{m}\rangle
\ee
$$=(ct+d)\langle \un{t}',\un{\Delta}_{s',u'}(\X)\rangle +\langle \un{t},\un{m}\rangle \;.
$$
By \eqref{hta} and \eqref{Delta_f} we get
$$
\E(\widetilde X_t|\F^f_{s,u})=\langle
A\un{t},(A^{-1})^T\left(\un{\Delta}_{s,u}(\widetilde\X)-\un{m}\right)\rangle +\langle \un{t},\un{m}\rangle =\langle \un{t},\un{\Delta}_{s,u}(\widetilde\X)\rangle \;.
$$
Thus the condition \eqref{LR} holds true and $\widetilde \X$ is a
harness. Similarly, one can verify that \eqref{LR} holds when $\varphi$ is a decreasing function.

We use \eqref{hta} to compute the mean of $\widetilde \X$
$$
\E(\widetilde X_t)=(ct+d)\langle \un{t}',\un{\mu}\rangle +\langle \un{t},\un{m}\rangle =\langle
A\un{t},\un{\mu}\rangle +\langle \un{t},\un{m}\rangle =\langle \un{t},\un{m}+A^T\un{\mu}\rangle \;,
$$
and \eqref{muf} follows.

To find the covariance we again use \eqref{hta} and the fact that
$\Cov\left(X_{s'},X_{t'}\right)$ is either $\langle \un{s}',\Sigma\un{t}'\rangle$ or $\langle \un{t}',\Sigma \un{s}'\rangle=\langle \un{s}',\Sigma^T\un{t}'\rangle$ depending whether $s'<t'$ (case $\det(A)>0$) or $s'>t'$  (case $\det(A)<0$).
For example, if $\det(A)>0$ then
\begin{multline*}
\Cov(\widetilde X_s,\widetilde X_t)=(cs+d)(ct+d)\Cov\left(X_{s'},X_{t'}\right)\\
=(cs+d)(ct+d)\langle \un{s}',\Sigma\un{t}'\rangle =
\langle  A\un{s},\Sigma A\un{t}\rangle =\langle \un{s},A^T\Sigma A\un{t}\rangle\;,
\end{multline*}
and thus \eqref{cov transf} follows. (We omit the proof when $\det A<0$.)

Next we tackle the conditional variance. Since $\phi$ is monotone on $\ST$,
\begin{equation}
  \label{Q-tranf}
 \Var(\widetilde X_t|\calF_{s,u}^f)=\begin{cases}
  (ct+d)^2  \Var( X_{t'}|\calF_{s',u'}) &\mbox{ if $\det(A)>0$}\,, \\ \\
  (ct+d)^2  \Var( X_{t'}|\calF_{u',s'}) & \mbox{ if $\det(A)<0$}\,.
\end{cases}
\end{equation}

Consider the case   $\det(A)<0$ so that $u'<t'<s'$.  Since $\un{\Delta}_{a,b}=\un{\Delta}_{b,a}$, by \eqref{Q-tranf}  and Lemma \ref{L: Delta_f}, the conditional variance is
\begin{multline}
  \label{VVV} \Var(\widetilde X_t|\calF^f_{s,u})
  =(ct+d)^2F_{t',u',s'}\Big(\chi+\left\langle A^{-1}\un{\theta},\un{\Delta}_{s,u}(\widetilde\X)-\un{m}\right\rangle
\\+\left\langle \un{\Delta}_{s,u}(\widetilde\X)-\un{m},A^{-1}\Gamma(A^{-1})^T(\un{\Delta}_{s,u}(\widetilde\X)-\un{m}) \right\rangle \Big).
\end{multline}

Using \eqref{Ftsu-vec}, \eqref{hta} and \eqref{J-props},
we get
\begin{multline*}(ct+d)^2F_{t',u',s'} =(ct+d)^2\frac{\langle\un{u'},J\un{t'}\rangle\langle\un{t'},J\un{s'}\rangle}{\langle\un{u'},J^T \Gamma  J\un{s'}\rangle} \\
 =\frac{\langle(cu+d)\un{u'},J(ct+d)\un{t'}\rangle\langle(ct+d)\un{t'},J (cs+d)\un{s'}\rangle}{\langle(cu+d)\un{u'},J^T \Gamma  J(cs+d)\un{s'}\rangle}
 \\
 =\frac{\langle A\un{u},JA\un{t}\rangle\langle A\un{t},J A\un{s}\rangle}{\langle A\un{u},J^T \Gamma  JA\un{s}\rangle}
=\frac{\langle\un{s},J\un{t}\rangle\langle\un{t},J\un{u}\rangle}{\langle\un{s},J^T \widetilde\Gamma  J\un{u}\rangle}.
\end{multline*}

So formula \eqref{VVV} rewrites   as
  \begin{multline*}
\Var(\widetilde X_t|\calF^f_{s,u})=(ct+d)^2F_{t',u',s'}\Big(\chi-\langle A^{-1}\un{\theta},\un{m}\rangle+\langle\un{m},\widetilde\Gamma^T\un{m}\rangle
\\
+\left\langle\un{\Delta}_{s,u}(\widetilde \X), A^{-1}\un{\theta}-(\widetilde\Gamma+\widetilde{\Gamma}^T)\un{m}\right\rangle
+\langle\un{\Delta}_{s,u}(\widetilde {\X}),\widetilde\Gamma \un{\Delta}_{s,u}(\widetilde {\X})
\Big)
\\=\frac{\langle\un{s},J\un{t}\rangle\langle\un{t},J\un{u}\rangle}{\langle\un{s},J^T \widetilde\Gamma  J\un{u}\rangle}\Big(
\tilde\chi+\left\langle\un{\Delta}_{s,u}(\widetilde \X), A^{-1}\un{\theta}-(\widetilde\Gamma+\widetilde{\Gamma}^T)\un{m}\right\rangle\\
+\langle\un{\Delta}_{s,u}(\widetilde {\X}),\widetilde\Gamma^T \un{\Delta}_{s,u}(\widetilde {\X})\rangle\Big).
  \end{multline*}
Since the last term is invariant under transposition,   we get \eqref{theta Y} and \eqref{Gamma A}.
 The case
  $\det(A)>0$ is handled similarly and the proof is omitted.
\end{proof}
The proof of Proposition \ref{P-normalization} is based on the formula for the covariance matrix of vector $\un{\Delta}_{s,u}$.
\begin{lemma}
\be\label{covv}
\cov\,\un{\Delta}_{s,u}=\frac{c_1-c_2}{u-s}\,J\un{u}\,\un{s}^TJ^T+\Sigma^T.
\ee
\end{lemma}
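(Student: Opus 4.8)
The plan is to rewrite $\un{\Delta}_{s,u}$ through the matrix form \eqref{D matrix} and then reduce \eqref{covv} to an elementary $2\times2$ matrix identity. Since $\langle\un{u},J\un{s}\rangle=u-s$, formula \eqref{D matrix} reads
\[
\un{\Delta}_{s,u}(\X)=\frac{1}{u-s}\,J\bigl(X_u\un{s}-X_s\un{u}\bigr),
\]
so $\un{\Delta}_{s,u}$ is the image under the deterministic linear map $\frac{1}{u-s}J$ of the random vector $W:=X_u\un{s}-X_s\un{u}$, and therefore $\cov\,\un{\Delta}_{s,u}=\frac{1}{(u-s)^2}\,J\,(\cov W)\,J^{T}$. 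A covariance matrix does not see the mean, so bilinearity of covariance together with \eqref{cov} --- using that $\Cov(X_s,X_u)=\langle\un{s},\Sigma\un{u}\rangle$ because $s<u$, which is the single place where the non-symmetric convention for $\Sigma$ enters and the reason $c_1$ and $c_2$ end up appearing asymmetrically --- gives
\[
\cov W=\langle\un{u},\Sigma\un{u}\rangle\,\un{s}\un{s}^{T}-\langle\un{s},\Sigma\un{u}\rangle\bigl(\un{s}\un{u}^{T}+\un{u}\un{s}^{T}\bigr)+\langle\un{s},\Sigma\un{s}\rangle\,\un{u}\un{u}^{T}.
\]

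Next I would dispose of the $J$'s. Since $J\in GL_2(\RR)$ and $J^{-1}=-J=J^{T}$, conjugating by $J$ shows that \eqref{covv} is equivalent to the purely deterministic identity
\[
\cov W=(c_1-c_2)(u-s)\,\un{u}\un{s}^{T}+(u-s)^{2}\,J^{T}\Sigma^{T}J ,
\]
and a one-line matrix multiplication gives $J^{T}\Sigma^{T}J=\macierz{c_3}{-c_1}{-c_2}{c_0}$. Both sides are now $2\times2$ matrices whose entries are polynomials in $s,u$ of degree at most two with coefficients linear in $c_0,\dots,c_3$; the identity is confirmed entrywise once one observes that in each entry the $s^2u^2$ term cancels and a factor $u-s$ can be extracted. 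Equivalently, read off entrywise, \eqref{covv} amounts to the three scalar identities $\Var(\Delta_{s,u})=c_0+\frac{c_1-c_2}{u-s}$, $\Var(\widetilde{\Delta}_{s,u})=c_3+\frac{su(c_1-c_2)}{u-s}$ and $\Cov(\Delta_{s,u},\widetilde{\Delta}_{s,u})=\frac{c_2u-c_1s}{u-s}$, each of which follows from \eqref{DD} and \eqref{cov} by a short computation (the off-diagonal entries of the right-hand side of \eqref{covv} coincide, as they must, by symmetry of a covariance matrix).

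The only genuine difficulty here is bookkeeping: keeping straight the $J$'s, the transposes, and the distinction between $\Sigma$ and $\Sigma^{T}$ forced by the standing convention $s\le t$, under which $\Sigma$ is not symmetric. A route that avoids the matrix algebra altogether is simply to compute the three scalar quantities $\Var(\Delta_{s,u})$, $\Var(\widetilde{\Delta}_{s,u})$, $\Cov(\Delta_{s,u},\widetilde{\Delta}_{s,u})$ directly from \eqref{DD} and \eqref{cov} and to match them against the four entries of the right-hand side of \eqref{covv}; this reduces the whole proof to three elementary calculations.
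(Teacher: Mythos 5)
Your proof is correct and follows essentially the same route as the paper: both start from \eqref{D matrix}, write $\cov\,\un{\Delta}_{s,u}=\frac{1}{(u-s)^2}J(\cov W)J^T$ with $W=X_u\un{s}-X_s\un{u}$, and expand by bilinearity of covariance. The only difference is the last step, where the paper factors the resulting expression coordinate-free via the identity $\un{s}\,\un{u}^T-\un{u}\,\un{s}^T=(u-s)J^T$, whereas you conjugate away the $J$'s and verify the remaining polynomial identity entrywise.
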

\begin{proof}
From \eqref{D matrix} we get
\begin{multline*}
\cov\,\un{\Delta}_{s,u}=\E\left(\un{\Delta}_{s,u}\un{\Delta}_{s,u}^T\right)-\E\left(\un{\Delta}_{s,u}\right)\E\left(\un{\Delta}_{s,u}^T\right)
\\=\frac{J\s\u^T\Sigma\u\s^TJ^T-J\u\s^T\Sigma\u\s^TJ^T+J\u\s^T\Sigma\s\u^tJ^T-J\s\s^T\Sigma\u\u^TJ^T}{(u-s)^2}.
\end{multline*}

Note that since $\s^T\Sigma\u=\u^T\Sigma^T\s$ and $\s\,\u^T-\u\,\s^T=(u-s)J^T$, the numerator can be written as
$$
J(\s\,\u^T-\u\,\s^T)\Sigma\u\,\s^TJ^T-J(\s\,\u^T-\u\,\s^T)\Sigma^T\s\,\u^TJ^T=(u-s)(\Sigma\u\,\s^T-\Sigma^T\s\,\u^T)J^T
$$
(recall that $J^T=-J$ and $JJ^T=I$). Further we write the above expression as
$$
(u-s)[(\Sigma-\Sigma^T)\u\,\s^T+\Sigma^T(\u\,\s^T-\s\,\u^T)]J^T=(u-s)(c_1-c_2)J\u\,\s^TJ^T+(u-s)^2\Sigma^TJJ^T
$$
and thus \eqref{covv} follows.
\end{proof}

\begin{proof}[Proof of Proposition \ref{P-normalization}]
We first remark that formulas  \eqref{cov transf}  and \eqref{Gamma A} imply that
$$
\tr(\Gamma\Sigma^T)=\tr(\widetilde\Gamma\widetilde\Sigma).
$$
Next, we note that
$$\chi+\langle \un{\theta},\un{\mu}\rangle+\langle \un{\mu},\Gamma\un{\mu}\rangle
=\widetilde \chi+\langle \widetilde{\un{\theta}}, \widetilde{\un{\mu}}\rangle+\langle  \widetilde{\un{\mu}}, \widetilde{\Gamma} \widetilde{\un{\mu}}\rangle.
$$
(This follows from a longer calculation based on the formulas from Proposition \ref{P-chi2chi}.)
Therefore, transformation formulas preserve \eqref{consistency}.

Next, we show that \eqref{consistency} implies \eqref{Ftsu-vec}. This will be accomplished by computing the averages of both sides of \eqref{quwa-chi}.

We first note that for any harness with covariance \eqref{cov}, the expected value of the left hand side of \eqref{quwa-chi} is
\begin{equation}\label{E(Var)}
\E\left(\V(X_t|\calF_{s,u})\right)=\frac{(t-s)(u-t)}{u-s}(c_1-c_2).
\end{equation}
To prove  \eqref{E(Var)}, we  use \eqref{covv}.  From \eqref{LR-Delta} we get
\begin{multline*}
\E\,\var(X_t|\mathcal{F}_{s,u})=\var\,X_t- \Var(\E(X_t|\calF_{s,u}))=
\var\,X_t-\t^T\cov(\un{\Delta}_{s,u})\t
\\=\t^T\Sigma\t-\frac{c_1-c_2}{u-s}\t^TJ\u\,\s^TJ^T\t-\t^T\Sigma^T\t
=\frac{c_1-c_2}{u-s}(\s^TJ\t) (\t^TJ\u)
\\
=\frac{c_1-c_2}{u-s}(t-s)(u-s).
\end{multline*}

Next, we compute the right hand side of \eqref{quwa-chi}.
%On the other hand, w
With
$K(\un{\Delta}_{s,u})=\chi+\langle \un{\theta},\un{\Delta}_{s,u}\rangle
+\langle \un{\Delta}_{s,u},\Gamma\un{\Delta}_{s,u}\rangle$, we have
\begin{equation}\label{E(K)}
\E\left(K(\un{\Delta}_{s,u})\right)=
\tr(\Gamma\Sigma^T)+K(\un{\mu})+(c_1-c_2)\frac{\langle \un{s},J^T\Gamma J \un{u}\rangle}{u-s}.
\end{equation}
To prove  \eqref{E(K)} we  note that
$
\E\,\un{\Delta}_{s,u}=\un{\mu}
$, so
\begin{multline}\label{K**}
\E\,K(\un{\Delta}_{s,u})=\chi+\E\,\theta^T\un{\Delta}_{s,u}+\E\left(\un{\Delta}_{s,u}^T\Gamma \un{\Delta}_{s,u}\right)\\ =
\chi+\theta^T\un{\mu}+\un{\mu}^T\Gamma \un{\mu}+\mathrm{tr}\,\left(\Gamma \cov\,\un{\Delta}_{s,u}\right)
=K(\un{\mu})+\tr\left(\Gamma \cov\,\un{\Delta}_{s,u}\right).
\end{multline}
From \eqref{covv} we get
\begin{equation}\label{K***}
\tr\left(\Gamma \cov\,\un{\Delta}_{s,u}\right)=\frac{c_1-c_2}{u-s}\tr\left(\Gamma\,J\u\,\s^TJ^T\right)+\tr\left(\Gamma\Sigma^T\right).
\end{equation}
Since
$$
\tr\left(\Gamma\,J\u\,\s^TJ^T\right)=\tr\left(\s^TJ^T \Gamma J \u\right)=\langle \un{s},J^T\Gamma J \un{u}\rangle,
$$
 \eqref{E(K)} follows from \eqref{K**} and \eqref{K***}.

Since $\tr(\Gamma\Sigma^T)+K(\un{\mu})=0$ by \eqref{consistency},
and  $\E\left(\V(X_t|\calF_{s,u})\right)=F_{t,s,u}\E\left(K(\un{\Delta}_{s,u})\right)$,
 in the non-degenerate  case $c_1>c_2$,
formula \eqref{E(Var)} implies that $\E\left(K(\un{\Delta}_{s,u})\right)\ne 0$ so  from \eqref{E(K)}  we see that $\langle \un{s},J^T\Gamma J \un{u}\rangle=u(1+s\sigma)+\tau-s\gamma\ne 0$. We also see that $F_{t,s,u}$ is given by formula \eqref{Ftsu-vec}, which is just a matrix form of \eqref{Ftsu}.

\end{proof}

\subsection{Proof of Theorem \ref{P-chi2QH}}
Let $A=\frac{1}{ad-bc}\macierz{d}{-b}{-c}{a}$ so that its inverse is $B=\macierz{a}{b}{c}{d}$.
We apply Proposition \ref{P-chi2chi} with $f(x,y)=([x,y]-[\alpha,\beta])B$ to
$$
\un{\mu}=\wekt{\beta}{\alpha},\; \Sigma=\left[
\begin{array}{ll}
 a c & a d \\
 b c & b d
\end{array}
\right],\; \Gamma=\macierz{\tau}{\rho/2}{\rho/2}{\sigma},\; \un{\theta}=\wekt{\theta}{\eta}.
$$
From the transformation formulas we get $\widetilde \mu=0$,  $\widetilde \Sigma=\macierz{0}{1}{0}{0}$, and  $\widetilde \chi=\chi+\alpha\eta+\theta\beta+\sigma \alpha^2+\tau\beta^2+2\rho\alpha\beta>0$  by \eqref{chi+}.
  We also get
  $$\widetilde{\un{\theta}}=\left[
\begin{array}{l}
 b (\eta +\beta  \rho +2 \alpha  \sigma )+a
   (\theta +\alpha  \rho +2 \beta  \tau ) \\
 d (\eta +\beta  \rho +2 \alpha  \sigma )+c
   (\theta +\alpha  \rho +2 \beta  \tau )
\end{array}
\right]$$
  and
  $$\widetilde{\Gamma}=\left[
\begin{array}{ll}
 \tau  a^2+b \rho  a+b^2 \sigma  & \frac{1}{2}
   (b c \rho +a d \rho +2 b d \sigma +2 a c \tau
   ) \\
 \frac{1}{2} (b c \rho +a d \rho +2 b d \sigma
   +2 a c \tau ) & \tau  c^2+d \rho  c+d^2
   \sigma
\end{array}
\right]\,.$$
The quadratic polynomial $K$ remains unchanged if  we replace $\widetilde \Gamma$ by
$$
\Gamma'=\macierz{\tau  a^2+b \rho  a+b^2 \sigma}{-\widetilde{\chi}}{\widetilde{\chi}+b c \rho +a d \rho +2 b d \sigma +2 a c \tau }{ \tau  c^2+d \rho  c+d^2
   \sigma
},
$$
see Remark \ref{Rem-Gamma}.
Rewriting $K$ as
$$K(\un{x})=
\widetilde{\chi}+\langle \widetilde{\un{\theta}},\un{x}\rangle+\langle \un{x},\Gamma'\un{x}\rangle=
\widetilde{\chi}\left( 1+\langle \frac{1}{\widetilde{\chi}}\widetilde{\un{\theta}},\un{x}\rangle+\langle \un{x},\frac{1}{\widetilde{\chi}}\Gamma'\un{x}\rangle\right),
$$
we get the parameters as claimed.

\qed

\subsection{Proof of Theorem \ref{P B2}}\label{Sect:proof_of_Thm1.1}
  With $s<t_1<t_2<u$, the conditional covariance of a standard quadratic harness  is
\begin{equation}
  \label{cond-cov}
\Cov\big(X_{t_1},X_{t_2}\big|\calF_{s,u}\big)=\frac{\left\langle \un{t}_2,J\un{u}\right\rangle\left\langle \un{s},J\un{t}_1\right\rangle}
{\langle \un{s},J^T\Gamma J \un{u}\rangle} K(\un{\Delta}_{s,u}),
\end{equation}
where $K(\un{a})=1+\langle \un{\theta},\un{a}\rangle+\langle\un{a},\Gamma\un{a}\rangle$
is the quadratic polynomial from \eqref{Eq:K} and  \eqref{quwa1}.
A quick way to see this is to notice that \eqref{LR} implies
$$ \Cov\big(X_{t_1},X_{t_2}\big|\calF_{s,u}\big)= \frac{u-t_2}{u-t_1}\Var(X_{t_1}|\calF_{s,u})=
 \frac{\langle\un{t}_2,J\un{u}\rangle}{\langle\un{t}_1,J\un{u}\rangle}\Var(X_{t_1}|\calF_{s,u}).$$

\begin{proof}[Proof of Theorem \ref{P B2}]

To prove that $v(1+r\sigma)+\tau-r\gamma>0$ we note that  $v\mapsto v(1+r\sigma)+\tau-r\gamma$ is a continuous function on $\RR$ which by
 Proposition \ref{P-normalization} applied to $\Z$
  cannot  cross zero on $(r,\infty)$.

To determine the mean and the variance of $\X$, we use Definition \ref{Def-Bridge}. Denote by $\calF_{s,u}^X$   the natural past-future filtration of $\X$.
By taking $g(x)=x$, from Definition \ref{Def-Bridge}(iii)  we see that for $t\in(r,v)$, we have
$
\E(X_t|\calF_{s,u}^X)=h_{s,t,u}(X_s,X_u)
$,
where
$$
h_{s,t,u}(x,y)=\frac{u-t}{u-s}x+\frac{t-s}{u-s}y.
$$
Thus by Definition \ref{Def-Bridge}(i),
$\E(X_t|\calF_{s,u}^X)\toP h_{r,t,v}(z_r,z_v)$ %
as $(s,u)\to (r,v)$.

 We note that as $s\searrow r$ and $u\nearrow v$, say over rational numbers, the  filtration
$ \calF_{s,u}^X$ decrease. So by the martingale convergence theorem,
$$\E(X_t)=\E\left(\lim_{(s,u)\to (r^+,v^-)}\E\left(X_t|\calF_{s,u}^X\right)\right)
=\frac{v-t}{v-r}z_r+\frac{t-r}{v-r}z_v=\langle \un{\Delta}_{r,v},\un{t}\rangle\,.
$$

Similarly, by martingale convergence theorem,  for fixed $t_1<t_2$ in $(r,v)$,
$$\E(X_{t_1}X_{t_2})=\E\left(\lim_{(s,u)\to (r^+,v^-)}\E\left(X_{t_1}X_{t_2}|\calF_{s,u}^X\right)\right)$$ %
Using again Definition \ref{Def-Bridge} with $g:\RR^2\to\RR$ defined by $g(x_1,x_2)=x_1x_2$, we see that
$$
\E\left(X_{t_1}X_{t_2}|\calF_{s,u}^X\right)=\tilde h_{s,u}(X_s,X_u),
$$
where $\tilde h$, defined through
$$
\E\left(Z_{t_1}Z_{t_2}|\calF_{s,u}\right)=\tilde h_{s,u}(Z_s,Z_u),
$$
 is computed from \eqref{cond-cov} as
$$
\tilde h_{s,u}(x,y)=\tfrac{(u-t_2)(t_1-s)}{u(1+s\sigma)+\tau-s\gamma } K\left(\tfrac{y-x}{u-s},\tfrac{ux-sy}{u-s}\right) +h_{s,t_1,u}(x,y) h_{s,t_2,u}(x,y).
$$
Since $\tilde h_{s,u}(X_s,X_u)\toP \tilde h_{r,v}(z_r,z_v)$   as $(s,u)\to (r^+,v^-)$, we get
 $\Cov(X_{t_1},X_{t_2})= M^2(v-t_2)(t_1-r)$ with
 \begin{equation}
  \label{MMM}
  M=\frac{\sqrt{K(\Delta_{r,v},\widetilde\Delta_{r,v})}}{\sqrt{v(1+r\sigma)+\tau-r\gamma}}>0\,.
\end{equation}

The above reasoning also shows that for $r<s<t<u<v$ the conditional  moments
 $\E(X_t|\calF_{s,u}^X)$ and $\V(X_t|\calF_{s,u}^X)$ %
 are given by the same polynomials as the corresponding conditional moments of $\Z$.
Thus the assumptions of Theorem \ref{P-chi2QH} hold, and we apply it with
$$
\alpha=\widetilde\Delta_{r,v},\; \beta=\Delta_{r,v},\; \chi=1, \; \rho=\gamma-1,
$$
$$
a=M \sqrt{v},\; b=-rM\sqrt{v},\; c=-M/\sqrt{v},\;d=M\sqrt{v}.
$$
(There are other possible choices that lead to "equivalent" quadratic  harnesses as in \eqref{EQ:equiv}.)
With the above choice of $a,b,c,d$,   formula \eqref{EQ:chi2QH} gives \eqref{AM-bridge}.
Since $\widetilde \chi=K(\Delta_{r,v},\widetilde\Delta_{r,v})>0$, assumption \eqref{chi+} holds,
and the parameters of the resulting quadratic harness are as claimed.

 \end{proof}

\delete{
\begin{remark}
Other quadratic harnesses with large $\gamma$ as in Example \ref{Ex:PB} are known. An example of quadratic harness on $(0,\infty)$ with $\gamma=1+2\sqrt{\sigma\tau}$ is a bi-Pascal process \cite{Maja:2009}, which is a quadratic harness on $(0,\infty)$ with
 arbitrary  $\sigma=\tau>0$ and arbitrary $\eta=\theta>2\sqrt{\tau}$. A related quadratic harness on a finite interval $(0,T)$ arises as a transformation \eqref{EQ:chi2QH} of the generalized Waring process introduced in \cite{burrell1988modelling} and \cite{burrell1988predictive}, see also \cite{zografi2001generalized}  and  \cite{Xekalaki:2008}.
\end{remark}
}

\delete{
\begin{remark}
For non-degenerate product covariance \eqref{cov-prod} represented by matrix \eqref{bookkeeping} with $\det\Theta>0$, formula \eqref{E(Var)} is
$$\E\left(\V(X_t|\calF_{s,u})\right)=\frac{(t-s)(u-t)}{u-s}\det\Theta\,.$$
\end{remark}
}
\delete{
\begin{remark}
It is natural to expect that Example \ref{Example: Dirichlet harness} can be generalized as follows.

Suppose that    $\sigma,\tau>0 $ are such that $\sigma\tau<1$, $\gamma=1-2\sqrt{\sigma\tau}$ and  $\eta, \theta$ are real numbers such that $\sqrt{\tau}\eta+\sqrt{\sigma}\theta= 0$.
Then  there exists  a square-integrable Markov process $(X_t)_{t\in (0,\infty)}$, namely a bridge of the Meixner process with suitable parameters, such that
\eqref{LR}, \eqref{EQ:q-Var}, and \eqref{EQ:cov} hold.

To prove this result, one would need to show that the set of values from \eqref{Cond meixner I} and  \eqref{Cond meixner II} exhausts all possible values. This is clear in the hyperbolic and parabolic cases $\theta_Y^2 \leq \tau_Y$, but the elliptic case $\theta_Y^2 > \tau_Y$ requires proof as there is only a discrete set of values of $z_v$ for each $v$.
\end{remark}
}

\subsection*{Acknowledgement}
Maja Jamio\l kowska worked out the construction \cite{Maja:2009}  which inspired our Proposition \ref{P: bi-poisson}. We also benefited from discussions with Ryszard Szwarc and Wojciech Matysiak.
This research was partially supported by NSF
grant \#DMS-0904720, and by  Taft Research Seminar 2008/09.

\bibliographystyle{acm}
\bibliography{../Vita,Mobius_08}

\begin{thebibliography}{10}

\bibitem{Barczy:2005}
{\sc Barczy, M., and Pap, G.}
\newblock Connection between deriving bridges and radial parts from
  multidimensional {O}rnstein-{U}hlenbeck processes.
\newblock {\em Period. Math. Hungar. 50}, 1-2 (2005), 47--60.

\bibitem{Biane98}
{\sc Biane, P.}
\newblock Processes with free increments.
\newblock {\em Math. Z. 227}, 1 (1998), 143--174.

\bibitem{Billingsley:1968}
{\sc Billingsley, P.}
\newblock {\em Convergence of probability measures}.
\newblock John Wiley \& Sons Inc., New York, 1968.

\bibitem{BKS97}
{\sc Bo{\.z}ejko, M., K{\"u}mmerer, B., and Speicher, R.}
\newblock $q$-{G}aussian processes: non-commutative and classical aspects.
\newblock {\em Comm. Math. Phys. 185}, 1 (1997), 129--154.

\bibitem{Bryc-Matysiak-Wesolowski-04}
{\sc Bryc, W., Matysiak, W., and Weso{\l}owski, J.}
\newblock Quadratic harnesses, $q$-commutations, and orthogonal martingale
  polynomials.
\newblock {\em Trans. Amer. Math. Soc. 359\/} (2007), 5449--5483.
\newblock arxiv.org/abs/math.PR/0504194.

\bibitem{Bryc-Matysiak-Wesolowski-04b}
{\sc Bryc, W., Matysiak, W., and Weso{\l}owski, J.}
\newblock The bi-{P}oisson process: a quadratic harness.
\newblock {\em Ann. Probab. 36\/} (2008), 623--646.
\newblock arxiv.org/abs/math.PR/0510208.

\bibitem{Bryc-Matysiak-Wesolowski-05}
{\sc Bryc, W., Matysiak, W., and Weso{\l}owski, J.}
\newblock Free quadratic harness.
\newblock {\em Stoch. Proc. Appl. 121\/} (2011), 657--671.
\newblock arxiv.org/abs/1003.4771.

\bibitem{Bryc-Wesolowski-03}
{\sc Bryc, W., and Weso{\l}owski, J.}
\newblock Conditional moments of $q$-{M}eixner processes.
\newblock {\em Probab. Theory Related Fields Fields 131\/} (2005), 415--441.
\newblock arxiv.org/abs/math.PR/0403016.

\bibitem{Bryc-Wesolowski-05}
{\sc Bryc, W., and Weso{\l}owski, J.}
\newblock Classical bi-{P}oisson process: an invertible quadratic harness.
\newblock {\em Statist. Probab. Lett. 76\/} (2006), 1664--1674.
\newblock arxiv.org/abs/math.PR/0508383.

\bibitem{Bryc-Wesolowski-08}
{\sc Bryc, W., and Weso{\l}owski, J.}
\newblock {A}skey--{W}ilson polynomials, quadratic harnesses and martingales.
\newblock {\em Ann. Probab. 38}, 3 (2010), 1221--1262.

\bibitem{chaumont2009markovian}
{\sc Chaumont, L., and Bravo, G.}
\newblock {Markovian Bridges: Weak continuity and pathwise constructions}.
\newblock arXiv:0905.2155v1, 2009.

\bibitem{Doskum-1974}
{\sc Doksum, K.}
\newblock Tailfree and neutral random probabilities and their posterior
  distributions.
\newblock {\em The Annals of Probability 2}, 2 (1974), 183--201.

\bibitem{Emery:2004}
{\sc \'Emery, M., and Yor, M.}
\newblock A parallel between {B}rownian bridges and gamma bridges.
\newblock {\em Publ. Res. Inst. Math. Sci. 40}, 3 (2004), 669--688.

\bibitem{Ferguson-1973}
{\sc Ferguson, T.~S.}
\newblock A {B}ayesian analysis of some nonparametric problems.
\newblock {\em The Annals of Statistics 1}, 2 (1973), 209--230.

\bibitem{Ferguson-1974}
{\sc Ferguson, T.~S.}
\newblock Prior distributions on spaces of probability measures.
\newblock {\em The Annals of Statistics 2}, 4 (1974), 615--629.

\bibitem{Fitzsimmons:1993}
{\sc Fitzsimmons, P., Pitman, J., and Yor, M.}
\newblock Markovian bridges: construction, {P}alm interpretation, and splicing.
\newblock In {\em Seminar on {S}tochastic {P}rocesses, 1992 ({S}eattle, {WA},
  1992)}, vol.~33 of {\em Progr. Probab.} Birkh\"auser Boston, Boston, MA,
  1993, pp.~101--134.

\bibitem{Hammersley}
{\sc Hammersley, J.~M.}
\newblock Harnesses.
\newblock In {\em Proc. Fifth Berkeley Sympos. Mathematical Statistics and
  Probability (Berkeley, Calif., 1965/66), Vol. III: Physical Sciences}. Univ.
  California Press, Berkeley, Calif., 1967, pp.~89--117.

\bibitem{Jacod-Protter-88}
{\sc Jacod, J., and Protter, P.}
\newblock Time reversal on {L}\'evy processes.
\newblock {\em Ann. Probab. 16}, 2 (1988), 620--641.

\bibitem{Maja:2009}
{\sc Jamio{\l}kowska, M.}
\newblock Bi-{P}ascal process -- definition and properties.
\newblock Master's thesis, Warsaw University of Technology, (in Polish) 2009.

\bibitem{jamison1974reciprocal}
{\sc Jamison, B.}
\newblock {Reciprocal processes}.
\newblock {\em Probability Theory and Related Fields 30}, 1 (1974), 65--86.

\bibitem{Mansuy-Yor-05}
{\sc Mansuy, R., and Yor, M.}
\newblock Harnesses, {L}\'evy bridges and {M}onsieur {J}ourdain.
\newblock {\em Stochastic Process. Appl. 115}, 2 (2005), 329--338.

\bibitem{Morris:1982}
{\sc Morris, C.~N.}
\newblock Natural exponential families with quadratic variance functions.
\newblock {\em Ann. Statist. 10}, 1 (1982), 65--80.

\bibitem{Plucinska83}
{\sc Pluci{\'n}ska, A.}
\newblock On a stochastic process determined by the conditional expectation and
  the conditional variance.
\newblock {\em Stochastics 10\/} (1983), 115--129.

\bibitem{schoutens2000stochastic}
{\sc Schoutens, W.}
\newblock {\em {Stochastic processes and orthogonal polynomials}}.
\newblock Springer Verlag, 2000.

\bibitem{Szpojankowski:2010}
{\sc Szpojankowski, K.}
\newblock Free quadratic harness.
\newblock Master's thesis, Warsaw University of Technology, (in Polish) 2010.

\bibitem{Wesolowski93}
{\sc Weso{\l}owski, J.}
\newblock Stochastic processes with linear conditional expectation and
  quadratic conditional variance.
\newblock {\em Probab. Math. Statist. 14\/} (1993), 33--44.

\bibitem{Williams73}
{\sc Williams, D.}
\newblock Some basic theorems on harnesses.
\newblock In {\em Stochastic analysis (a tribute to the memory of Rollo
  Davidson)}. Wiley, London, 1973, pp.~349--363.

\end{thebibliography}

\end{document}